\newtheorem{theorem}{Theorem}[section]
\newtheorem{proposition}[theorem]{Proposition}
\newtheorem{corollary}[theorem]{Corollary}
\newtheorem{lemma}[theorem]{Lemma}
\theoremstyle{definition}
\newtheorem{definition}[theorem]{Definition} % definition numbers are dependent on theorem numbers
\newtheorem{example}[theorem]{Example} % same for example numbers
\newtheorem{remark}[theorem]{Remark}
\begin{document}
\title{On Riemannian manifolds\\ with positive weighted Ricci curvature\\ of negative effective dimension}
\date{\today}
\author{MAI, Cong Hung}
\address{Department of Mathematics, Kyoto University, Kyoto 606-8502, Japan} 
\email{hongmai@math.kyoto-u.ac.jp}

\begin{abstract}
		In this paper, we investigate complete Riemannian manifolds
		satisfying the lower weighted Ricci curvature bound $\mathrm{Ric}_{N} \geq K$ with $K>0$
		for the negative effective dimension $N<0$.
		We analyze two $1$-dimensional examples of constant curvature $\mathrm{Ric}_N \equiv K$
		with finite and infinite total volumes.
		We also discuss when the first nonzero eigenvalue of the Laplacian
		takes its minimum under the same condition $\mathrm{Ric}_N \ge K>0$,
		as a counterpart to the classical Obata rigidity theorem.
		Our main theorem shows that, if $N<-1$ and the minimum is attained,
		then the manifold splits off the real line as a warped product of hyperbolic nature.
	\end{abstract}
\subjclass[2010]{53C24}
\keywords{spectral gap, weighted Ricci curvature, negative effective dimension}
%%% AMS subject classification

	\maketitle

	%\tableofcontents
	
	\section{Introduction}%%%%%%%%%%%%%%%%%%
	%%%%%%%%%%%
	
	Riemannian manifolds of Ricci curvature bounded below
	are classical research subjects in comparison geometry and geometric analysis.
	Recently, the diverse developments on the \emph{curvature-dimension condition}
	in the sense of Lott, Sturm and Villani have shed new light on this theory.
	The curvature-dimension condition $\mathrm{CD}(K,N)$
	is a synthetic notion of lower Ricci curvature bounds for metric measure spaces.
	The parameters $K$ and $N$ are usually regarded as
	``a lower bound of the Ricci curvature'' and ``an upper bound of the dimension'',
	respectively.
	Thus $N$ is sometimes called the \emph{effective dimension}.
	The roles of $K$ and $N$ are better understood when we consider
	a weighted Riemannian manifold $(M,g,m)$,
	a Riemannian manifold of dimension $n$ equipped with an arbitrary smooth measure $m$.
	Then the Ricci curvature is modified into the \emph{weighted Ricci curvature}
	$\mathrm{Ric}_N$ involving a parameter $N$,
	and $\mathrm{CD}(K,N)$ is equivalent to $\mathrm{Ric}_N \ge K$
	(see \cite{vRS,StI,StII,LV} as well as \cite{Vi} for $N \in [n,\infty]$, \cite{Oneg,Oneedle} for $N \le 0$,
	and also \cite{Oint} for the Finsler analogue).
	
	For $N \in [n,\infty]$, the weighted Ricci curvature $\mathrm{Ric}_N$
	(also called the \emph{Bakry--\'Emery--Ricci curvature})
	has been intensively studied by, for instance, Bakry and his collaborators
	in the framework of the \emph{$\Gamma$-calculus} (see \cite{BGL}).
	Recently it turned out that there is a rich theory also for $N \in (-\infty,1]$,
	though this range seems strange due to the above interpretation
	of $N$ as an upper dimension bound.
	Among others, various Poincar\'e-type inequalities (\cite{KM}),
	the curvature-dimension condition (\cite{Oneg,Oneedle}),
	isoperimetric inequalities (\cite{Mil,Kl,Oneedle}),
	and the splitting theorem (\cite{WY}) were studied for $N<0$ or $N \le 1$. 
	
	The aim of this article is to contribute to the study
	of the structure of Riemannian manifolds with $\mathrm{Ric}_N \ge K$ for $K>0$ and $N<0$.
	In Section~\ref{sc:1dim}
	we analyze two $1$-dimensional examples $M_1,M_2$ of $\mathrm{Ric}_N \equiv K$
	(appearing in \cite{KM,Mil}) having different natures.
	The first example $M_1$ has finite volume and enjoys only the exponential concentration,
	in particular, $M_1$ does not satisfy the logarithmic Sobolev inequality. 
	For $N<-1$, this space attains the minimum of the first nonzero eigenvalue of the (weighted) Laplacian
	(derived from the Bochner inequality, see Proposition~\ref{pr:sgap}).
	The second example $M_2$ has infinite volume
	and reveals the difficulty to obtain a volume comparison under $\mathrm{Ric}_N \ge K$.
	
	In Section~\ref{sc:sharp}, we investigate
	when the minimum of the first nonzero eigenvalue is attained under the condition $\mathrm{Ric}_N \ge K>0$ with $N<-1$,
	as a counterpart to the Obata rigidity theorem (\cite{Ob}).
	Our main theorem (Theorem~\ref{th:main}) asserts that, when $\dim M \ge 2$,
	we have a warped product splitting
	$M \cong \mathbb{R} \times_{\cosh(\sqrt{K/(1-N)}t)} \Sigma^{n-1}$ of hyperbolic nature.
	Moreover, $\Sigma$ enjoys $\mathrm{Ric}_{N-1} \ge K(2-N)/(1-N)$.
	This warped product splitting should be compared with Cheng--Zhou's theorem in the case of $N = \infty$,
	where the sharp spectral gap forces the space to isometrically split off a $1$-dimensional Gaussian space
	(see \cite{CZ} and Theorem~\ref{th:CZ} for details). Our proof considers the equality case
	in Bochner inequality, that is related
	to the dimension free version in \cite{CZ} but requires a further discussion due to the fact that the Hessian of the eigenfunction does not vanish (Lemma \ref{lm:eigen}). Therefore we have only the warped product rather than the isometric product in \cite{CZ}. We stress that in our setting of $N < 0$, it is interesting to have a hyperbolic structure even when the curvature is positive.
	\medskip
	
	\textit{Acknowledgements}.
	I would like to thank my supervisor, Professor Shin-ichi Ohta,
	for the kind guidance, encouragement and advice he has provided
	throughout my time working on this paper. I also would like to thank Professor Emanuel Milman for the
valuable comments on the earlier version of the paper, especially on the case of $N \in [-1,0)$.
	
	\section{Preliminaries}%%%%%%%%%%%%%%%%%
	%%%%%%%%%%%
	
	\subsection{Weighted Riemannian manifolds}%%%%%%%%%%%
	%%%%%%%%%%%
	
	A weighted Riemannian manifold $(M,g,m)$ will be a pair of
	a complete, connected, boundaryless manifold $M$
	equipped with a Riemannian metric $g$ and a measure $m = e^{-\psi}\mathrm{vol}_{g}$,
	where $\psi \in C^{\infty}(M)$ and $\mathrm{vol}_{g}$ is the standard volume measure on $(M,g)$.
	On $(M,g,m)$, we define the weighted Ricci curvature as follows:
	
	\begin{definition}[Weighted Ricci curvature]\label{df:wRic}
		Given a unit vector $v \in U_{x}M$ and $N \in (-\infty,0) \cup [n,\infty]$,
		the \emph{weighted Ricci curvature} $\mathrm{Ric}_N(v)$ is defined by
		\begin{enumerate}[(1)]
			\item $\mathrm{Ric}_{N}(v) :=\mathrm{Ric}_g(v) +\mathrm{Hess}\,\psi(v,v)
			-\displaystyle\frac{\langle \nabla \psi(x),v\rangle^2}{N-n}$ for $N \in (-\infty,0) \cup (n,\infty)$;
			
			\item $\mathrm{Ric}_{n}(v) :=\mathrm{Ric}_g(v) +\mathrm{Hess}\,\psi(v,v)$
			if $\langle \nabla\psi(x),v\rangle = 0$, and $\mathrm{Ric}_n(v):=-\infty$ otherwise;				
			
			\item $\mathrm{Ric}_{\infty}(v) :=\mathrm{Ric}_g(v) +\mathrm{Hess}\,\psi(v,v)$,
		\end{enumerate}
		where $n=\dim M$ and $\mathrm{Ric}_g$ denotes the Ricci curvature of $(M,g)$.
		The parameter $N$ is sometimes called the \emph{effective dimension}.
		We also define $\mathrm{Ric}_{N}(cv) :=c^{2}\mathrm{Ric}_{N}(v)$ for $c \ge 0$. 
	\end{definition}
	
	Note that if $\psi$ is constant then the weighted Ricci curvature coincides with
	$\mathrm{Ric}_g(v)$ for all $N$.
	When $\mathrm{Ric}_{N}(v) \geq K$ holds for some $K\in \mathbb{R}$ and all unit vectors $v \in TM$,
	we will write $\mathrm{Ric}_{N}\geq K$.
	By definition,
	\[ \mathrm{Ric}_n(v) \le \mathrm{Ric}_N(v) \le \mathrm{Ric}_{\infty}(v) \le \mathrm{Ric}_{N'}(v) \]
	holds for $n \le N<\infty$ and $-\infty<N'<0$,
	and $\mathrm{Ric}_N(v)$ is non-decreasing in $N$ in the ranges $(-\infty,0)$ and $[n,\infty]$.
	
	\begin{remark}
		The weighted Ricci curvature for $N \in [n,\infty]$ has been intensively and extensively investigated,
		see \cite{Qi,BGL} for instance.
		The study for the negative effective dimension $N<0$ is more recent.
		One can find in \cite{Mil,Kl,Oneedle} the isoperimetric inequality,
		in \cite{WY} the Cheeger--Gromoll type splitting theorem,
		and in \cite{Oneg,Oneedle} the curvature-dimension condition in this context.
	\end{remark}
	
	We also define the weighted Laplacian with respect to $m$.
	
	\begin{definition}[Weighted Laplacian]\label{df:Lap}
		The \emph{weighted Laplacian} (also called the \emph{Witten Laplacian})
		of $u \in C^\infty(M)$ is defined as follows:
		\[ \Delta_{m}u := \Delta u - \langle \nabla u,\nabla\psi \rangle. \]
	\end{definition}
	
	Notice that the Green formula (the integration by parts formula)
	\[ \int _{M} u\Delta_{m}v \,dm =-\int _{M}\langle \nabla u,\nabla v\rangle \,dm =\int _{M} v\Delta_{m} u\,dm \]
	holds provided $u$ or $v$ belongs to $C_c^{\infty}(M)$
	(smooth functions with compact supports) or $H^1_0(M)$.
	
	\subsection{Bochner inequality and eigenvalues of Laplacian}%%%%%%%%%
	%%%%%%%%%%%
	
	Recall that the \emph{Bochner--Weitzenb\"ock formula}
	associated with the weighted Ricci curvature $\mathrm{Ric}_{\infty}$ and the weighted Laplacian $\Delta_m$
	holds as follows:
	For $u \in C^{\infty}(M)$, we have
	\begin{equation}\label{eq:BW}
	\Delta_{m}\bigg(\frac{|\nabla u|^{2}}{2} \bigg) -\langle \nabla\Delta_{m}u,\nabla u\rangle
	=\mathrm{Ric}_\infty(\nabla u) +\|\mathrm{Hess}\, u\|_{HS}^{2},
	\end{equation}
	where $\|\cdot\|_{HS}$ stands for the Hilbert--Schmidt norm (with respect to $g$).
	As a corollary we obtain the \emph{Bochner inequality} for $N \in (-\infty,0) \cup [n,\infty]$
	(the case of negative effective dimension was studied independently in \cite{KM,Oneg}).
	We give an outline of the proof for later use.
	
	\begin{theorem}[Bochner inequality]\label{th:Boch}
		For $N \in (-\infty,0) \cup [n,\infty]$ and any $u \in C^{\infty}(M)$, we have
		\[ \Delta_{m} \bigg( \frac{|\nabla u|^{2}}{2} \bigg) -\langle \nabla\Delta_{m}u,\nabla u\rangle
		\geq \mathrm{Ric}_{N}(\nabla u) + \frac{(\Delta_{m}u)^{2}}{N}. \]
	\end{theorem}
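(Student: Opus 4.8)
The plan is to derive the Bochner inequality from the Bochner--Weitzenböck formula \eqref{eq:BW} by estimating the Hilbert--Schmidt norm of the Hessian from below in terms of $(\Delta_m u)^2/N$. The key algebraic fact is that for any symmetric operator $A$ on an $n$-dimensional inner product space and any real number $t$, one has $\|A\|_{HS}^2 \ge (\operatorname{tr} A)^2/n$, but here we need a weighted refinement that incorporates the term $-\langle \nabla\psi,\nabla u\rangle^2/(N-n)$ appearing in $\mathrm{Ric}_N$. Concretely, I would split the trace of the Hessian: write $\Delta u = \Delta_m u + \langle \nabla u,\nabla\psi\rangle$, and apply the elementary inequality to bound $\|\mathrm{Hess}\, u\|_{HS}^2 \ge (\Delta u)^2/n = (\Delta_m u + \langle\nabla u,\nabla\psi\rangle)^2/n$.

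First I would recall the identity \eqref{eq:BW} and substitute $\mathrm{Ric}_\infty(\nabla u) = \mathrm{Ric}_N(\nabla u) + \langle\nabla\psi,\nabla u\rangle^2/(N-n)$ (valid when $N \neq n$; the cases $N=n$ and $N=\infty$ are handled separately or as limits). Then the right-hand side becomes $\mathrm{Ric}_N(\nabla u) + \langle\nabla\psi,\nabla u\rangle^2/(N-n) + \|\mathrm{Hess}\, u\|_{HS}^2$, and it remains to show
\[
\frac{\langle\nabla\psi,\nabla u\rangle^2}{N-n} + \|\mathrm{Hess}\, u\|_{HS}^2 \ge \frac{(\Delta_m u)^2}{N}.
\]
Setting $a := \Delta_m u$ and $b := \langle\nabla u,\nabla\psi\rangle$ so that $\Delta u = a+b$, and using $\|\mathrm{Hess}\, u\|_{HS}^2 \ge (a+b)^2/n$, the claim reduces to the purely numerical inequality $(a+b)^2/n + b^2/(N-n) \ge a^2/N$ for all real $a,b$ (with $n \ge 1$, $N \notin [1,n]$, $N\neq 0$). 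This is a quadratic-form inequality in $(a,b)$; I would verify it by checking that the difference is a perfect square, namely that it equals $\frac{N}{n(N-n)}\bigl(b - \frac{n-N}{N}a\bigr)^2 \cdot (\text{sign factor})$ — more cleanly, one optimizes over $b$: the left side as a function of $b$ is minimized at $b = -\frac{(N-n)a}{N}$ wait, I would just complete the square directly and confirm the residual has the correct sign in each of the ranges $N \in (-\infty,0)$ and $N \in [n,\infty)$.

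The main obstacle, such as it is, is bookkeeping the sign of $N-n$ and of $N$ across the two disconnected ranges of admissible $N$: when $N \in [n,\infty)$ both $N$ and $N-n$ are positive and the classical Cauchy--Schwarz-type argument applies directly, whereas when $N<0$ both $N$ and $N-n$ are negative, which flips inequalities and is precisely why the negative-dimension Bochner inequality was only established more recently (\cite{KM,Oneg}); one must check the quadratic form $(a+b)^2/n + b^2/(N-n) - a^2/N$ is genuinely nonnegative there rather than nonpositive. I would also treat the boundary and limit cases: $N = n$ forces $\langle\nabla\psi,\nabla u\rangle = 0$ by Definition~\ref{df:wRic}(2), so $\Delta u = \Delta_m u$ and the bare inequality $\|\mathrm{Hess}\,u\|_{HS}^2 \ge (\Delta u)^2/n$ suffices; $N = \infty$ gives $(\Delta_m u)^2/N = 0$ and follows immediately from \eqref{eq:BW} since $\|\mathrm{Hess}\, u\|_{HS}^2 \ge 0$. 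Finally I would note for later use (as the statement promises) the equality condition: equality forces $\mathrm{Hess}\, u = (\Delta u / n)\,g$ together with $b = -\frac{(N-n)}{N}\cdot\frac{a}{?}$ — i.e. the proportionality $\langle\nabla u,\nabla\psi\rangle$ is pinned to a specific multiple of $\Delta_m u$ — which is exactly the rigidity input needed in Lemma~\ref{lm:eigen}.
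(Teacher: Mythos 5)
Your proposal is correct and follows the same route as the paper: both reduce via $\|\mathrm{Hess}\,u\|_{HS}^2 \ge (\Delta u)^2/n$ to the scalar inequality $(p+q)^2/n + q^2/(N-n) \ge p^2/N$ with $p=\Delta_m u$, $q=\langle\nabla u,\nabla\psi\rangle$, which the paper establishes by the explicit identity $(p+q)^2/n = p^2/N - q^2/(N-n) + \frac{N(N-n)}{n}\bigl(\frac{p}{N}+\frac{q}{N-n}\bigr)^2$ and the sign condition $N(N-n)\ge 0$. Your suggested optimization over $q$ is just a repackaging of that same completed square (the minimizer is $q = -(N-n)p/N$, giving minimum value exactly $p^2/N$), and your noted equality conditions are precisely the ones the paper later extracts in Lemma~\ref{lm:eigen}.
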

	
	\begin{proof}
		Fix $x \in M$ and let $B$ the matrix representation of $\mathrm{Hess}\, u(x)$ for some orthonormal basis.
		Then we find
		\[ \|\mathrm{Hess}\, u\|_{HS}^{2} =\text{tr}(B^2) \geq \frac{(\mathrm{tr}\, B)^2}{n} =\frac{(\Delta u)^2}{n}. \]
		Putting $p=\Delta_{m}u$ and $q=\langle \nabla u,\nabla\psi \rangle$, we have
		\begin{align*}
		\frac{(\Delta u)^2}{n}
		&= \frac{(p+q)^2}{n} =\frac{p^2}{N} -\frac{q^2}{N-n}
		+\frac{N(N-n)}{n}\bigg(\frac{p}{N} + \frac{q}{N-n}\bigg)^2 \\
		&\geq \frac{p^2}{N} -\frac{q^2}{N-n}
		=\frac{(\Delta_{m}u)^2}{N} - \frac{{\langle \nabla u,\nabla\psi \rangle^2}}{N-n}.
		\end{align*}
		Combining these with the Bochner--Weitzenb\"ock formula \eqref{eq:BW} we obtain the Bochner inequality.
	\end{proof}
	
	We remark that $N(N-n) \ge 0$ used in the proof fails for $N \in (0,n)$.
	The Bochner inequality has quite rich applications in geometric analysis.
	In this article we are interested in the first nonzero eigenvalue of the weighted Laplacian,
	a generalization of the \emph{Lichnerowicz inequality}.
	The case of the negative effective dimension $N<0$ was studied in \cite{KM,Oneg}.
	Here we give a proof for completeness.
	
	\begin{proposition}[First nonzero eigenvalue]\label{pr:sgap}
		Let $(M,g,m)$ be a complete weighted Riemannian manifold satisfying $\mathrm{Ric}_{N} \ge K$
		for some $K>0$ and $N<0$, and assume $m(M)<\infty$.
		Then the first nonzero eigenvalue of the nonnegative operator $-\Delta_{m}$
		is bounded from below by $KN/(N-1)$.
	\end{proposition}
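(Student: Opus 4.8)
The plan is to run the classical Lichnerowicz argument, adapted to the negative effective dimension by integrating the Bochner inequality of Theorem~\ref{th:Boch} against $m$. Let $u$ be a first eigenfunction, so that $-\Delta_m u = \lambda u$ with $\lambda>0$ the first nonzero eigenvalue; by elliptic regularity $u \in C^\infty(M)$, and normalizing $\int_M u^2\,dm = 1$ the Green formula gives $\int_M |\nabla u|^2\,dm = -\int_M u\,\Delta_m u\,dm = \lambda$ and $\int_M (\Delta_m u)^2\,dm = \lambda^2$. (If the infimum of the spectrum on the orthogonal complement of the constants is not attained, one runs the same computation along a minimizing sequence of Rayleigh quotients.)

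Next I would integrate the Bochner inequality $\Delta_m(|\nabla u|^2/2) - \langle \nabla\Delta_m u,\nabla u\rangle \ge \mathrm{Ric}_N(\nabla u) + (\Delta_m u)^2/N$ over $M$ with respect to $m$. The term $\int_M \Delta_m(|\nabla u|^2/2)\,dm$ vanishes by the Green formula (tested against the constant $1$), while $\int_M \langle \nabla\Delta_m u,\nabla u\rangle\,dm = -\lambda\int_M |\nabla u|^2\,dm = -\lambda^2$, so the left-hand side integrates to $\lambda^2$. On the right-hand side, $\mathrm{Ric}_N(\nabla u) \ge K|\nabla u|^2$ by the homogeneity convention $\mathrm{Ric}_N(cv) = c^2\mathrm{Ric}_N(v)$, and since $N<0$ the term $(\Delta_m u)^2/N$ is nonpositive and integrates to $\lambda^2/N$; hence the right-hand side is bounded below by $K\lambda + \lambda^2/N$. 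Combining, $\lambda^2 \ge K\lambda + \lambda^2/N$, and dividing by $\lambda>0$ gives $\lambda(N-1)/N \ge K$, so $\lambda \ge KN/(N-1)$ because $(N-1)/N>0$ for $N<0$. It is precisely the ``wrong'' sign of $\lambda^2/N$ for $N<0$ that makes the sharp constant $KN/(N-1)$ strictly smaller than the classical value $K$ recovered as $N \to -\infty$.

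The one delicate point, which I expect to be the main obstacle, is justifying the two applications of the Green formula: $M$ is complete but possibly noncompact (indeed the extremal model $M_1$ of Section~\ref{sc:1dim} has finite volume yet is noncompact), so $\int_M \Delta_m(|\nabla u|^2/2)\,dm = 0$ and the integration by parts for $\int_M \langle \nabla\Delta_m u,\nabla u\rangle\,dm$ are not automatic. I would handle this by an exhaustion argument, inserting Lipschitz cutoffs $\chi_k \uparrow 1$ with $|\nabla\chi_k| \to 0$ in $L^2(m)$ and controlling the error terms using $m(M)<\infty$ together with the $L^2(m)$-integrability of $u$, $|\nabla u|$ and $\|\mathrm{Hess}\,u\|_{HS}$; alternatively, one can simply invoke the integrated form of the Bochner inequality already established in this range of $N$ in \cite{KM,Oneg}. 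Everything else is the elementary algebra above.
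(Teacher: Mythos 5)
Your proof is correct and follows essentially the same Lichnerowicz-type argument as the paper: integrate the Bochner inequality of Theorem~\ref{th:Boch} under $\mathrm{Ric}_N\ge K$, use the Green formula (justified by $m(M)<\infty$ so that constants lie in $H^1_0$) together with $\int_M(\Delta_m u)^2\,dm=\lambda\int_M|\nabla u|^2\,dm$, and solve the resulting inequality $\lambda(N-1)/N\ge K$. Your extra remarks on the cutoff/exhaustion needed to justify integration by parts on a noncompact $M$, and on invoking the integrated Bochner inequality from \cite{KM,Oneg}, correctly flag the one technical point the paper compresses into the phrase ``constant functions belong to $H^1_0(M)$.''
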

	
	\begin{proof}
		Let $u \in H^1_0 \cap C^{\infty}(M)$ be a nonconstant eigenfunction of $\Delta_m$
		with $\Delta_m u=-\lambda u$, $\lambda>0$.
		We deduce from the Bochner inequality (Theorem~\ref{th:Boch})
		and the condition $\mathrm{Ric}_N \ge K$ that
		\[ \Delta_{m} \bigg( \frac{|\nabla u|^{2}}{2} \bigg)
		\ge \langle \nabla\Delta_{m}u,\nabla u\rangle
		+K|\nabla u|^2 +\frac{(\Delta_{m}u)^{2}}{N}. \]
		Since constant functions belong to $H^1_0(M)$ thanks to the hypothesis $m(M)<\infty$,
		the integration of the above inequality yields
		\[ 0 \ge \bigg( \frac{1}{N}-1 \bigg) \int_M (\Delta_m u)^2 \,dm +K\int_M |\nabla u|^2 \,dm. \]
		We again use the integration by parts to see
		\[ \int_M (\Delta_m u)^2 \,dm = -\lambda \int_M u \Delta_m u \,dm
		=\lambda \int_M |\nabla u|^2 \,dm. \]
		Therefore we have
		\[ \frac{1-N}{N}\lambda +K \le 0, \]
		which shows the claim $\lambda \ge KN/(N-1)$.
	\end{proof}
	
	It was observed in \cite[\S 3.2]{KM} and \cite[Theorem~6.1]{Mil}
	that the estimate $\lambda \ge KN/(N-1)$ is sharp for $N \in (-\infty,-1]$
	and, somewhat surprisingly, \emph{not sharp} for $N<0$ close to $0$.
	See Section~\ref{sc:sharp} for a further discussion,
	where we discuss the rigidity for the constant $KN/(N-1)$ in the case $N<-1$.
	The first nonzero eigenvalue of the weighted Laplacian is related to the concentration of measures.
	When $m(M)=1$, we define the \emph{concentration function} of $(M,g,m)$ by
	\[ \alpha(r) :=\sup \big\{ 1-m\big( B(A,r) \big) \,|\, A \subset M:\text{Borel},\, m(A) \ge 1/2 \big\}. \]
	See \cite[Theorem~3.1]{Le} for the following corollary.
	
	\begin{corollary}[Exponential concentration]\label{cr:conc}
		Let $(M,g,m)$ be a compact weighted Riemannian manifold satisfying $\mathrm{Ric}_{N} \geq K$
		for some $K>0$ and $N<0$, and assume $m(M)=1$.
		Then $(M,g,m)$ satisfies the \emph{exponential concentration}
		$\alpha(r) \le e^{-\sqrt{(KN)/(N-1)}r/3}$.
	\end{corollary}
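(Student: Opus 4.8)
The plan is to combine the spectral gap estimate of Proposition~\ref{pr:sgap} with the classical principle, going back to Gromov--Milman and made quantitative by Ledoux, that a positive spectral gap of $-\Delta_m$ (equivalently, a Poincar\'e inequality) forces exponential concentration.

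First I would note that, since $M$ is compact and $m(M)=1<\infty$, Proposition~\ref{pr:sgap} applies and yields that the first nonzero eigenvalue $\lambda_1$ of $-\Delta_m$ satisfies $\lambda_1 \ge KN/(N-1)$; moreover $KN/(N-1)>0$ because $K>0$ while for $N<0$ both $N$ and $N-1$ are negative, so $N/(N-1)>0$. Equivalently, writing $\bar f := \int_M f\,dm$, the space obeys the Poincar\'e inequality
\[ \lambda_1 \int_M (f-\bar f)^2 \,dm \le \int_M |\nabla f|^2 \,dm \qquad \text{for every } f\in H^1(M). \]

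Next I would invoke \cite[Theorem~3.1]{Le}, which states that a probability metric measure space satisfying a Poincar\'e inequality with constant $\lambda_1>0$ enjoys the exponential concentration $\alpha(r) \le e^{-\sqrt{\lambda_1}\,r/3}$. For completeness I would recall the mechanism in one line: applying the Poincar\'e inequality to $e^{sf/2}$ for a $1$-Lipschitz function $f$ gives a differential inequality for $s\mapsto\int_M e^{sf}\,dm$ which integrates to a bound on the Laplace transform of $f-\bar f$ on the interval $(0,2\sqrt{\lambda_1})$; Markov's inequality, together with the fact that the distance to a Borel set of $m$-measure at least $1/2$ is a $1$-Lipschitz function with median $0$, then produces the stated decay with the universal constant $1/3$. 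Since $t\mapsto e^{-\sqrt{t}\,r/3}$ is nonincreasing, the lower bound $\lambda_1 \ge KN/(N-1)$ upgrades the estimate to $\alpha(r)\le e^{-\sqrt{(KN)/(N-1)}\,r/3}$, as claimed.

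The only point requiring a little care --- and the closest thing to an obstacle --- is verifying that our weighted setting really fits the hypotheses of \cite{Le}, which are formulated for general Markov triples. In our case $-\Delta_m$ is a nonnegative self-adjoint operator on $L^2(M,m)$ with discrete spectrum by compactness, the Green formula recalled above supplies the needed integration by parts, and the intrinsic metric of the associated Dirichlet form coincides with the Riemannian distance, so that ``$1$-Lipschitz'' has its usual meaning. Granting this, the corollary follows immediately.
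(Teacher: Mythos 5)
Your proposal is correct and follows exactly the same route as the paper: the paper proves Proposition~\ref{pr:sgap} and then simply cites \cite[Theorem~3.1]{Le} to obtain the corollary, which is precisely your combination of the spectral gap bound $\lambda_1 \ge KN/(N-1)>0$ with Ledoux's Poincar\'e-to-exponential-concentration theorem. The brief sanity checks you add (positivity of $KN/(N-1)$ for $N<0$, applicability of Ledoux's framework to the weighted Laplacian on a compact manifold) are exactly the implicit points and are handled correctly.
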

	
	\section{Two examples of $1$-dimensional spaces of constant curvature}\label{sc:1dim}%%%%%%%
	%%%%%%%%%%%
	
	In this section we analyze two examples of $1$-dimensional spaces such that
	$\mathrm{Ric}_N \equiv K$ for some $K>0$ and $N<0$.
	These examples will be helpful to understand the general picture
	of the spaces satisfying $\mathrm{Ric}_N \ge K>0$.
	In particular, we shall give negative answers to some naive guesses.
	
	The first example is the following:
	
	\begin{example}\label{ex:Mil}
		For $K>0$ and $N<0$, the space
		\[ M_1:= \bigg( \mathbb{R},|\cdot|,\cosh\bigg( {\sqrt{\frac{K}{1-N}}}x\bigg)^{N-1} \,dx \bigg), \]
		where $|\cdot|$ denotes the canonical distance structure,
		satisfies $\mathrm{Ric}_N \equiv K$.
	\end{example}
	
	This is a model space in Milman's isoperimetric inequality (see Case~1 of \cite[Corollary~1.4]{Mil}).
	Notice that the total volume is finite since
	\begin{align*}
	\int_{\mathbb{R}} \cosh\bigg( {\sqrt{\frac{K}{1-N}}}x\bigg)^{N-1} \,dx
	&\le 2\int_0^{\infty} \big( e^{\sqrt{K/(1-N)}x} \big)^{N-1} \,dx \\
	&= 2 \int_0^{\infty} e^{-\sqrt{K(1-N)}x} \,dx <\infty.
	\end{align*}
	In order to see the curvature identity,
	we observe that the weight function is given by
	\[ \psi(x) =(1-N)\log \bigg( {\cosh} \sqrt{\frac{K}{1-N}}x \bigg), \]
	and
	\begin{align*}
	\psi'(x) &=\sqrt{K(1-N)} \tanh\bigg( \sqrt{{\frac{K}{1-N}}}x \bigg), \\
	\psi''(x) &=K -K\tanh \bigg( \sqrt{\frac{K}{1-N}}x \bigg)^2.
	\end{align*}
	Since the Ricci curvature vanishes in the $1$-dimensional case,
	we have the curvature identity
	\begin{align*}
	&\mathrm{Ric}_{N} \bigg( \frac{\partial}{\partial x}\Big|_x \bigg)
	= \psi''(x) +\frac{\psi'(x)^2}{1-N} \\
	&= K -K \tanh \bigg( \sqrt{\frac{K}{1-N}}x \bigg)^2
	+\frac{K(1-N)}{1-N} \tanh \bigg( \sqrt{\frac{K}{1-N}}x \bigg)^2 \\
	&= K.
	\end{align*}
	
	In \cite{KM} it was observed that this space attains the minimum value of the first nonzero eigenvalue in respect of Proposition~\ref{pr:sgap}.
	
	\begin{lemma}[Minimal first nonzero eigenvalue]\label{lm:sharp}
		For $N < -1$, the first nonzero eigenvalue of $-\Delta_m$ in the space $M_1$ in Example~$\ref{ex:Mil}$
		coincides with $KN/(N-1)$.
	\end{lemma}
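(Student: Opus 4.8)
The lower bound $\lambda_1 \ge KN/(N-1)$ for the first nonzero eigenvalue $\lambda_1$ of $-\Delta_m$ on $M_1$ is already provided by Proposition~\ref{pr:sgap} (note $m(M_1)<\infty$, so its hypotheses are met). Thus the plan is to establish the matching upper bound by exhibiting an explicit eigenfunction. I would guess, and then verify, that with the notation $a:=\sqrt{K/(1-N)}>0$ the function $u(x):=\sinh(ax)$ satisfies $\Delta_m u=-\lambda u$ with $\lambda=KN/(N-1)$, the minimal value allowed by Proposition~\ref{pr:sgap}.

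Concretely, first I would recall from Example~\ref{ex:Mil} that $\psi'(x)=(1-N)a\tanh(ax)$, so that $\Delta_m v = v''-\psi' v'$ on $M_1$, and then substitute $u=\sinh(ax)$. A short computation gives
\[
 \Delta_m u(x) = a^2\sinh(ax) - (1-N)a\tanh(ax)\cdot a\cosh(ax) = \bigl(1-(1-N)\bigr)a^2\,u(x) = Na^2\,u(x),
\]
and since $a^2 = K/(1-N)$ and $N<0$ this equals $-\tfrac{KN}{N-1}\,u(x)$ with $\tfrac{KN}{N-1}>0$.

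Next I would check that $u$ is an admissible test function, i.e. $u\in H^1_0(M_1)\cap C^\infty(M_1)$ and $u$ is $L^2(m)$-orthogonal to the constants. Orthogonality is immediate, as $u$ is odd and $m$ is symmetric about the origin. For $u\in H^1_0(M_1)$ it suffices, since $M_1$ is complete, to see that $u^2$ and $(u')^2$ are $m$-integrable; here, because $\cosh(ax)^{N-1}$ is comparable to $e^{(N-1)a|x|}$ while $\sinh(ax)^2$ and $\cosh(ax)^2$ are comparable to $e^{2a|x|}$ as $|x|\to\infty$, both integrands decay like $e^{(N+1)a|x|}$, which is integrable exactly when $N<-1$. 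Once this is in place, the Green formula applied to $u$ together with $\Delta_m u = -\tfrac{KN}{N-1}u$ gives $\int_{M_1}|\nabla u|^2\,dm = \tfrac{KN}{N-1}\int_{M_1}u^2\,dm$, so the Rayleigh quotient of $u$ equals $KN/(N-1)$. Hence $\lambda_1 \le KN/(N-1)$, which combined with Proposition~\ref{pr:sgap} yields the claimed equality.

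The main (and essentially the only) obstacle is the integrability check, which pinpoints the role of the assumption $N<-1$: for $N\in(-1,0)$ the candidate $\sinh(ax)$ is no longer in $L^2(m)$, in keeping with the fact recalled after Proposition~\ref{pr:sgap} that the constant $KN/(N-1)$ is not sharp for $N$ close to $0$. Alternatively, one could phrase the whole argument as verifying that $\sinh(ax)$ is a genuine $L^2$-eigenfunction of $\Delta_m$ (essentially self-adjoint on the complete manifold $M_1$) and then invoking Proposition~\ref{pr:sgap} to conclude that $KN/(N-1)$ is necessarily the smallest positive eigenvalue.
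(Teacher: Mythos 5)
Your proof is correct and follows essentially the same route as the paper: both use the candidate eigenfunction $u(x)=\sinh\bigl(\sqrt{K/(1-N)}\,x\bigr)$, compute $\Delta_m u=-\tfrac{KN}{N-1}u$ directly, and invoke Proposition~\ref{pr:sgap} for the matching lower bound. Your version is merely a bit more explicit about the $L^2$ and $H^1_0$ admissibility check (which the paper compresses into ``$u\in L^2(M_1)$ by the assumption $N<-1$''), and that added detail is a welcome clarification of exactly where $N<-1$ is used.
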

	
	\begin{proof}
		We shall show that the function
		\[ u(x) =\sinh \bigg( \sqrt{\frac{K}{1-N}}x \bigg) \]
		gives the minimal first nonzero eigenvalue.
		Notice that $u\in L^2(M_1)$ by the assumption $N < -1$, and
		\[ u'(x) =\sqrt{\frac{K}{1-N}} \cosh \bigg( \sqrt{\frac{K}{1-N}}x \bigg), \qquad
		u''(x) =\frac{K}{1-N} u. \]
		Thus the weighted Laplacian of $u$ is calculated as
		\begin{align*}
		\Delta_{m}u(x) &=u''(x) -u'(x) \psi'(x)
		= \frac{K}{1-N}u(x) -K \sinh \bigg( \sqrt{\frac{K}{1-N}}x \bigg) \\
		&= \frac{K}{1-N}u(x) -Ku(x) =-\frac{KN}{N-1}u(x).
		\end{align*}
	\end{proof}
	
	\begin{remark}[Failure of log-Sobolev inequality]\label{rm:LSI}
		For $N \in [n,\infty]$, the bound $\mathrm{Ric}_N \ge K>0$ is known to imply
		the \emph{logarithmic Sobolev inequality}
		(see, for example, \cite{BGL} for the proofs based on the $\Gamma$-calculus):
		\begin{equation}\label{eq:LSI}
		\int_{\{f>0\}} f \log f \,dm \le \frac{N-1}{2KN} \int_{\{f>0\}} \frac{|\nabla f|^2}{f} \,dm
		\end{equation}
		for nonnegative functions $f \in H^1(M)$ with $\int_M f \,dm=1$.
		It is well known that the logarithmic Sobolev inequality implies
		the \emph{normal concentration}, see \cite[Theorem~5.3]{Le}.
		One might expect that the logarithmic Sobolev inequality \eqref{eq:LSI} has a counterpart for $N<0$
		similarly to the spectral gap.
		This is, however, not the case since $M_1$ (normalized as $m(M_1)=1$)
        given in Example~\ref{ex:Mil} enjoys only the exponential concentration
        (consider $A=[0,\infty)$).
        See \cite[Proposition~6.4]{Mil} for a detailed estimate of the concentration function.
	\end{remark}
	
	Our second example is as follows.
	
	\begin{example}\label{ex:flat}
		For $K>0$ and $N<0$, the space
		\[ M_2:= (\mathbb{R},|\cdot|,e^{-\sqrt{K(1-N)}x} \,dx) \]
		satisfies $\mathrm{Ric}_N \equiv K$.
	\end{example}
	
	This space also appeared in Case~2 of \cite[Corollary~1.4]{Mil}.
	The weight function is the linear function $\psi(x)=\sqrt{K(1-N)}x$ and
	the curvature identity is straightforward:
	\[ \mathrm{Ric}_{N} \bigg( \frac{\partial}{\partial x}\Big|_x \bigg)
	=\frac{\psi'(x)^2}{1-N} =K. \]
	
	We stress that $M_2$ has very different natures from $M_1$ in Example~\ref{ex:Mil}.
	Recall that, when $N=\infty$,
	$\mathrm{Ric}_{\infty} \ge K>0$ implies that the measure has the Gaussian decay
	and the total volume is finite (\cite[Theorem~4.26]{StI}).
	From Example~\ref{ex:Mil} one may expect that $\mathrm{Ric}_N \ge K>0$ for $N<0$
	still implies the exponential decay, however,
	it is not the case by the second example $M_2$ above.
	We refer to \cite{Sa2} for a related work on weighted Riemannian manifolds with boundaries,
	where a volume comparison for regions spreading from the boundary was established for $N \le 1$.
	
	We also remark that, different from the case of $N=\infty$,
	taking products will destroy the curvature bound for $N<0$
	due to the nonlinearity of the term $\langle \nabla\psi,v \rangle^2$ in $\psi$
	in the definition of $\mathrm{Ric}_N(v)$.
	For instance,
	\[ (\mathbb{R}^n,|\cdot|,e^{-x^1-x^2- \cdots -x^n} \,dx^1 dx^2 \cdots dx^n) \]
	satisfies only $\mathrm{Ric}_N \ge 0$
	(consider $v$ in the kernel of $d\psi$).
	
	\section{First nonzero eigenvalue of Laplacian}\label{sc:sharp}%%%%%%%%%%%%%
	%%%%%%%%%%%%%
	
	As we saw in Proposition~\ref{pr:sgap},
	the first nonzero eigenvalue of $-\Delta_{m}$
	on a weighted Riemannian manifold satisfying $\mathrm{Ric}_{N} \ge K>0$
	is bounded from below by $KN/(N-1)$ (or $K$ if $N = \infty$),
	which is sharp for $N \le -1$. 
	In this section we consider the rigidity problem on when the minimum value is attained.
	In the unweighted ($N=n$) case,
	the classical Obata theorem \cite{Ob} asserts the following.
	
	\begin{theorem}[Obata rigidity theorem]\label{th:Obata}
		Let $(M,g)$ be an $n$-dimensional Riemannian manifold
		with $\mathrm{Ric}_{g} \geq K >0$.
		Then the first nonzero eigenvalue of $-\Delta$ coincides with $Kn/(n-1)$
		if and only if $M$ is isometric to the $n$-dimensional sphere $\mathbb{S}^{n}$
		with radius $\sqrt{(n-1)/K}$.
	\end{theorem}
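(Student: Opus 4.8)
The plan is to isolate the equality case of the Bochner inequality \eqref{eq:BW}, turn it into a rigid second--order equation for the eigenfunction, and integrate that equation along geodesics. First note that $\mathrm{Ric}_g\geq K>0$ forces $M$ to be compact by Bonnet--Myers, so $-\Delta$ has discrete spectrum and the first nonzero eigenvalue $\lambda_1$ exists; running the argument of Proposition~\ref{pr:sgap} with $N=n$ (equivalently, the classical Lichnerowicz computation: integrate \eqref{eq:BW} with $\psi$ constant, use $\|\mathrm{Hess}\,u\|_{HS}^2\geq(\Delta u)^2/n$, $\mathrm{Ric}_g(\nabla u)\geq K|\nabla u|^2$, and $\int_M(\Delta u)^2=\lambda_1\int_M|\nabla u|^2$) already yields $\lambda_1\geq Kn/(n-1)$. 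So the real content is the rigidity. For the trivial direction one only has to check that on $\mathbb{S}^n$ of radius $\sqrt{(n-1)/K}$ the restrictions of the ambient linear coordinates are eigenfunctions of $-\Delta$ with eigenvalue exactly $Kn/(n-1)$.

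For the forward direction, let $u$ be a nonconstant eigenfunction with $\Delta u=-\lambda_1 u$ and $\lambda_1=Kn/(n-1)$. Equality in the integrated Bochner inequality forces equality, pointwise and (by continuity) everywhere, in the two inequalities used above: $\mathrm{Ric}_g(\nabla u)=K|\nabla u|^2$ on $\{\nabla u\neq 0\}$, and $\|\mathrm{Hess}\,u\|_{HS}^2=(\Delta u)^2/n$ everywhere. The latter Cauchy--Schwarz equality says the Hessian is pure trace, $\mathrm{Hess}\,u=\frac{\Delta u}{n}g=-\frac{\lambda_1}{n}u\,g$; writing $c:=\sqrt{K/(n-1)}$ (so $\lambda_1/n=c^2$) this is Obata's equation
\[ \mathrm{Hess}\,u+c^2u\,g=0. \]

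Now I would analyze this equation. Since $v(|\nabla u|^2)=2\,\mathrm{Hess}\,u(\nabla u,v)=-2c^2u\,v(u)$, the quantity $|\nabla u|^2+c^2u^2$ is constant; rescaling $u$ we may take it to equal $c^2$, so $\max u=1$, $\min u=-1$. Fix $p$ with $u(p)=1$; then $\nabla u(p)=0$, and along any unit-speed geodesic $\gamma$ from $p$ the function $f(t)=u(\gamma(t))$ satisfies $f''=\mathrm{Hess}\,u(\dot\gamma,\dot\gamma)=-c^2f$ with $f(0)=1$, $f'(0)=0$, so $f(t)=\cos(ct)$. Hence $u=\cos(c\,r)$ with $r:=d(p,\cdot)$; there is a point $q$ at distance $\pi/c$ with $u(q)=-1$, $M$ is covered by minimizing geodesics from $p$ to $q$, and on $M\setminus\{p,q\}$ the level sets of $u$ are smooth hypersurfaces with $\nabla u/|\nabla u|$ a unit geodesic field, producing a warped-product expression $g=dr^2+\phi(r)^2h$ near $p$. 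Feeding $u=\cos(cr)$ into $\mathrm{Hess}\,u+c^2u\,g=0$ gives $u'\phi'=-c^2u\phi$, i.e. $\phi'/\phi=c\cot(cr)$, so $\phi=A\sin(cr)$; smoothness of $g$ at $p$ (namely $\phi(0)=0$, $\phi'(0)=1$, and the link $(\Sigma^{n-1},h)$ being the unit round sphere) forces $A=1/c$ and $h=g_{\mathbb{S}^{n-1}}$. Thus $g=dr^2+c^{-2}\sin^2(cr)\,g_{\mathbb{S}^{n-1}}$ on $M\setminus\{p,q\}$, which is precisely the round metric of radius $1/c=\sqrt{(n-1)/K}$ and extends smoothly across $p,q$, giving $M\cong\mathbb{S}^n(\sqrt{(n-1)/K})$.

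The main obstacle is the last step: passing rigorously from ``$u$ is the radial function $\cos(cr)$'' to the global warped-product decomposition and, above all, to smoothness at the two poles $p$ and $q$. One must check that the integral curves of $\nabla u$ are unbroken geodesics running from $p$ to $q$, that the geodesic spheres about $p$ coincide with the level sets of $u$, that $\{u=-1\}$ is a single point, and that no conical singularity develops at $p$ or $q$ --- this is exactly where the normalization $\phi'(0)=1$ and the identification of the link with the unit $(n-1)$-sphere are needed. A cleaner alternative to the warped-product bookkeeping is to show that the flow of $X:=\nabla u/\sqrt{c^2-c^2u^2}$ preserves the level sets and transports their induced metrics by the explicit factor $\sin(cr)/\sin(cr_0)$, and then assemble the isometry with $\mathbb{S}^n(1/c)$ directly; either way, the smooth closing-up at the poles is the delicate point.
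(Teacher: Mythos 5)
The paper does not prove Theorem~\ref{th:Obata} --- it is stated as classical background and cited from Obata~\cite{Ob}. Your argument is a correct rendering of the standard proof, and it is worth noting that it runs along exactly the lines the paper itself uses for its main Theorem~\ref{th:main}: equality in the Bochner inequality forces a pointwise Hessian identity for the eigenfunction (your $\mathrm{Hess}\,u + c^2 u\,g = 0$ is the positive-curvature counterpart of the identity $\mathrm{Hess}\,u = -\tfrac{Ku}{N-1}\,g$ in Lemma~\ref{lm:eigen}), which is then integrated along geodesics and converted into a warped-product description (your $\cos$ playing the role of the paper's $\cosh$).

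Two small points to tighten. First, you invoke the warped-product form $g = dr^2 + \phi(r)^2 h$ near $p$ before deriving it, and then feed $u=\cos(cr)$ into the Hessian equation to solve for $\phi$; that is slightly circular. The cleaner order is the reverse: substitute $u = \cos(cr)$ into $\mathrm{Hess}\,u = -c^2u\,g$ to get $\mathrm{Hess}\,r = c\cot(cr)\,(g - dr\otimes dr)$, so the second fundamental form of each geodesic sphere $\{r=r_0\}$ equals $c\cot(cr_0)$ times its induced metric; integrating $\mathcal{L}_{\partial_r} g_r = 2c\cot(cr)\,g_r$ gives $g_r = \big(\sin(cr)/\sin(cr_0)\big)^2 g_{r_0}$, and the $r_0\to 0$ limit together with the Gauss lemma normalization $g_r \sim r^2 g_{\mathbb{S}^{n-1}}$ fixes both the fiber metric and $A = 1/c$. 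Second, the uniqueness of the antipode $q$, which you leave as a task, follows by repeating the radial argument centered at $q$: it gives $u = -\cos(c\,d(q,\cdot))$, so any $q'$ with $u(q')=-1$ satisfies $\cos(c\,d(q,q'))=1$, whence $q'=q$ by the Bonnet--Myers bound $\mathrm{diam}\,M\le\pi/c$. With these two repairs your proof is complete and is, in substance, Obata's.
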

	
	The case of $N \in (n,\infty)$ turns out void as follows.
	
	\begin{theorem}[Ketterer, Kuwada]\label{th:KK}
		Let $(M,g,m)$ be a weighted Riemannian manifold of dimension $n$
		with $\mathrm{Ric}_N \geq K >0$ for some $N \in [n,\infty)$.
		Then the first nonzero eigenvalue of $-\Delta_m$ coincides with $KN/(N-1)$
		if and only if $N=n$, $m=c \cdot \mathrm{vol}_g$ for some constant $c>0$
		and $M$ is isometric to the $n$-dimensional sphere $\mathbb{S}^{n}$ with radius $\sqrt{(n-1)/K}$.
	\end{theorem}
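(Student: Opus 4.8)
The plan is to reopen the proof of Proposition~\ref{pr:sgap}, which itself rests on the proof of the Bochner inequality (Theorem~\ref{th:Boch}), and to extract the full set of equality conditions that are forced when the first nonzero eigenvalue equals $KN/(N-1)$. I expect these conditions to collapse $N$ to $n$, after which the statement reduces to the classical Obata theorem (Theorem~\ref{th:Obata}). First I would record two preliminary facts. Under $\mathrm{Ric}_N\ge K>0$ with $N\in[n,\infty)$ the generalized Bonnet--Myers theorem makes $M$ compact, so $-\Delta_m$ has discrete spectrum and the first nonzero eigenvalue $\lambda$ is attained by a nonconstant smooth eigenfunction $u$ with $\Delta_m u=-\lambda u$; since $\lambda>0$, integration by parts gives $\int_M u\,dm=0$, so $u$ changes sign and in particular $\max_M u>0$. (We may assume $n\ge 2$, hence $N\ge 2>1$; a compact weighted $1$-manifold cannot satisfy $\mathrm{Ric}_N\ge K>0$, as one sees at a maximum point of the weight.)

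Assume now $\lambda=KN/(N-1)$. Integrating the Bochner inequality of Theorem~\ref{th:Boch} against $m$ and substituting $\Delta_m u=-\lambda u$, exactly as in the proof of Proposition~\ref{pr:sgap}, recovers the bound $\lambda\ge KN/(N-1)$, with equality if and only if every intermediate inequality is an equality $m$-almost everywhere, hence everywhere by smoothness. Tracing the proof of Theorem~\ref{th:Boch}, the decisive one is the equality case of the algebraic estimate $(\Delta u)^2/n+q^2/(N-n)\ge(\Delta_m u)^2/N$ with $q=\langle\nabla u,\nabla\psi\rangle$, which forces
\[ (N-n)\,\Delta_m u+N\,\langle\nabla u,\nabla\psi\rangle\equiv 0 \quad\text{on } M. \]
Evaluating this identity at a point $p_{+}$ where $u$ attains its maximum: there $\nabla u(p_{+})=0$, so the second term vanishes and we get $(N-n)\,\Delta_m u(p_{+})=0$. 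If $N>n$ this forces $\Delta_m u(p_{+})=0$, i.e.\ $\lambda\,u(p_{+})=0$, i.e.\ $u(p_{+})=0$, contradicting $u(p_{+})=\max_M u>0$. Therefore $N=n$.

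Once $N=n$, the definition of $\mathrm{Ric}_n$ together with $\mathrm{Ric}_n\ge K>-\infty$ forces $\langle\nabla\psi,v\rangle=0$ for every unit $v$, hence $\nabla\psi\equiv0$, so $\psi$ is constant, $m=c\cdot\mathrm{vol}_g$, and $\Delta_m=\Delta$. Then $\lambda=Kn/(n-1)$ is the first nonzero eigenvalue of $-\Delta$, and Obata's theorem (Theorem~\ref{th:Obata}) yields that $(M,g)$ is isometric to $\mathbb{S}^n$ of radius $r=\sqrt{(n-1)/K}$. The converse is immediate: on this sphere with constant weight one has $\mathrm{Ric}_g\equiv K$, hence $\mathrm{Ric}_N\equiv K$ for every $N$, and the first nonzero eigenvalue of $-\Delta$ equals $n/r^{2}=Kn/(n-1)=KN/(N-1)$ since $N=n$.

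The main obstacle is the bookkeeping in the second paragraph: one has to re-enter the proof of the Bochner inequality to see that equality there imposes both the umbilicity relation $\mathrm{Hess}\,u=(\Delta u/n)\,g$ and the algebraic relation displayed above (together with $\mathrm{Ric}_N(\nabla u)=K|\nabla u|^2$), and then to notice that it is the algebraic relation alone, tested at a critical point of the eigenfunction, that already pins $N=n$. After that no analysis of the equality case of an Obata-type Hessian equation is required, since the geometry of $M$ enters only through the already classical Theorem~\ref{th:Obata}. A minor point to verify along the way is that the ``almost everywhere'' equalities upgrade to ``everywhere'', which is automatic since $u$ and $\psi$ are smooth.
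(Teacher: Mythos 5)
Your proof is correct, but it takes a genuinely different route from the paper's. The paper does not prove Theorem~\ref{th:KK} directly; it cites Ketterer's result (that an eigenfunction of $KN/(N-1)$ forces the maximal diameter $\pi\sqrt{(N-1)/K}$) together with Kuwada's maximal diameter rigidity (which then forces $N=n$), so the paper's route passes through a global diameter statement. You instead stay entirely inside the Bochner-equality bookkeeping: integrating Theorem~\ref{th:Boch} as in Proposition~\ref{pr:sgap}, noting that equality makes the pointwise inequality an identity, extracting the algebraic relation $(N-n)\Delta_m u + N\langle\nabla u,\nabla\psi\rangle\equiv 0$, and then evaluating at a maximum of $u$ where $\nabla u=0$ to get $(N-n)\lambda u(p_+)=0$, contradicting $u(p_+)>0$ unless $N=n$. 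This is a clean and self-contained shortcut that avoids both Ketterer's and Kuwada's theorems; it only needs generalized Bonnet--Myers for compactness and the classical Obata theorem at the end. It is also very much in the spirit of the paper's own Lemma~\ref{lm:eigen}, which extracts the same pair of equality conditions in the $N<0$ regime but never exploits the critical-point evaluation, since that trick is vacuous there ($N\neq n$ automatically). One small wrinkle worth stating explicitly when you write this up: the algebraic equality $\frac{p}{N}+\frac{q}{N-n}=0$ is only a nontrivial consequence of the Bochner equality when $N>n$ (for $N=n$ the coefficient $N(N-n)$ vanishes and no information is produced, which is fine since that is the case you want); and the passage from ``equality $m$-a.e.'' to ``equality everywhere'' is, as you say, immediate from smoothness of both sides of the pointwise Bochner inequality.
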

	
	Precisely, it is shown in \cite{Ke} in the general framework of RCD-spaces
	(metric measure spaces satisfying the \emph{Riemannian curvature-dimension condition}) that
	the existence of an eigenfunction for the eigenvalue $KN/(N-1)$
	implies the maximal diameter $\mathrm{diam}(M)=\pi\sqrt{(N-1)/K}$
	in the Bonnet--Myers type theorem.
	This is, however, achieved only when $N=n$ by Kuwada's result in \cite{Ku}.
	If we admit singularities, then the sharp spectral gap is attained for spherical suspensions
	as discussed in \cite{Ke} for RCD-spaces (see also \cite[\S 3.2]{KM}).
	
	Next, in the case of $N=\infty$, an interesting rigidity theorem was established in \cite{CZ}.
	
	\begin{theorem}[Cheng--Zhou]\label{th:CZ}
		Let $(M,g,m)$ be an $n$-dimensional weighted Riemannian manifold
		with $\mathrm{Ric}_{\infty} \geq K >0$.
		If the first nonzero eigenvalue of $-\Delta_m$ coincides with $K$,
		then $M$ is isometric to the product space $\Sigma^{n-1} \times \mathbb{R}$
		as weighted Riemannian manifolds,
		where $\Sigma^{n-1}$ is an $(n-1)$-dimensional manifold with $\mathrm{Ric}_{\infty} \ge K$
		and $\mathbb{R}$ is equipped with the Gaussian measure $e^{-Kx^2/2}dx$.
	\end{theorem}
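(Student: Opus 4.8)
The plan is to read the splitting off from the equality case of the Bochner inequality for $\mathrm{Ric}_\infty$. Let $u\in C^\infty(M)$ be an eigenfunction realizing the first nonzero eigenvalue, so that $\Delta_m u=-Ku$ and $u$ is nonconstant. Since $\mathrm{Ric}_\infty\ge K>0$ forces $m(M)<\infty$ together with Gaussian decay of $m$ (\cite[Theorem~4.26]{StI}), the constant functions lie in $H^1_0(M)$ and one may integrate the Bochner--Weitzenb\"ock formula \eqref{eq:BW} against $m$. Using $\int_M\Delta_m(|\nabla u|^2/2)\,dm=0$, the identity $\langle\nabla\Delta_m u,\nabla u\rangle=-K|\nabla u|^2$, and the bound $\mathrm{Ric}_\infty(\nabla u)\ge K|\nabla u|^2$, this yields
\[ K\int_M|\nabla u|^2\,dm\;\ge\;K\int_M|\nabla u|^2\,dm+\int_M\|\mathrm{Hess}\,u\|_{HS}^2\,dm , \]
so all inequalities are equalities: $\mathrm{Hess}\,u\equiv0$ and $\mathrm{Ric}_\infty(\nabla u)=K|\nabla u|^2$ pointwise.

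Next I would convert $\mathrm{Hess}\,u\equiv0$ into a metric splitting. Then $\nabla u$ is parallel, hence of constant norm, and this norm is a positive constant because $u$ is nonconstant; rescaling $u$ we may assume $|\nabla u|\equiv1$. A complete connected Riemannian manifold admitting a function with parallel unit gradient splits isometrically, via the gradient flow (which is by isometries since a parallel field is Killing and its flow lines are complete geodesics), as $\Sigma^{n-1}\times\mathbb{R}$ with $\Sigma:=u^{-1}(0)$ and $u$ corresponding to the coordinate $t$ on $\mathbb{R}$, after absorbing an additive constant.

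It then remains to identify the measure and the curvature of $\Sigma$. Writing $m=e^{-\psi}\mathrm{vol}_g$ and using (in product coordinates $(x',t)$) that $t$ is an affine coordinate on the flat factor, so $\Delta t=0$, the eigenvalue equation reads $-\partial_t\psi=\Delta_m t=-Kt$, hence $\psi(x',t)=\phi(x')+\tfrac{K}{2}t^{2}$ for some $\phi\in C^\infty(\Sigma)$. Therefore $m=m_\Sigma\otimes\big(e^{-Kt^{2}/2}\,dt\big)$ with $m_\Sigma:=e^{-\phi}\mathrm{vol}_{g_\Sigma}$, i.e.\ the line factor carries the Gaussian measure. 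Finally, since both $g$ and $\psi$ split, $\mathrm{Ric}_\infty$ is block diagonal with respect to $T\Sigma\oplus\mathbb{R}\partial_t$: along $\partial_t$ it equals $\partial_t^{2}(\tfrac{K}{2}t^{2})=K$ (matching the equality $\mathrm{Ric}_\infty(\nabla u)=K$), and on $T\Sigma$ it equals $\mathrm{Ric}_{g_\Sigma}+\mathrm{Hess}_\Sigma\phi=\mathrm{Ric}^\Sigma_\infty$. Hence the hypothesis $\mathrm{Ric}^M_\infty\ge K$ restricts to $\mathrm{Ric}^\Sigma_\infty\ge K$, which finishes the proof.

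The step requiring real care is the integration by parts on the noncompact manifold $M$: one must justify $\int_M\Delta_m(|\nabla u|^2/2)\,dm=0$ and the other integrated identities, i.e.\ check that $u$, $\nabla u$ and $\mathrm{Hess}\,u$ decay fast enough against the Gaussian-type measure $m$ for all boundary terms to vanish (typically via a cutoff argument exploiting the finiteness and Gaussian decay of $m$). Once the pointwise identities $\mathrm{Hess}\,u\equiv0$ and $\mathrm{Ric}_\infty(\nabla u)=K|\nabla u|^2$ are in hand, the rest is soft.
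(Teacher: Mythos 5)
This theorem is quoted by the paper from Cheng--Zhou~\cite{CZ} and stated without proof, so there is no in-paper argument to compare against directly; but your proof is correct and is, in essence, the proof given in~\cite{CZ}. It also runs exactly parallel to the strategy of the paper's own main result (Theorem~\ref{th:main}, via Lemma~\ref{lm:eigen}): integrate the Bochner--Weitzenb\"ock formula against $m$, extract the pointwise equality case, and read off a splitting from the resulting constraint on $\mathrm{Hess}\,u$. The one structural difference is that for $N=\infty$ you get $\mathrm{Hess}\,u\equiv 0$, so $\nabla u$ is a parallel Killing field and the manifold splits as an honest Riemannian product, whereas the paper's $N<-1$ case yields the non-vanishing constraint $\mathrm{Hess}\,u=-\tfrac{Ku}{N-1}g$ and therefore only a warped product. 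Two small points worth making explicit in a clean write-up: (a)~the existence of a smooth $L^2$-eigenfunction attaining the first nonzero eigenvalue should be justified --- here it follows because $\mathrm{Ric}_\infty\ge K>0$ forces $m(M)<\infty$ with Gaussian tails, which makes $-\Delta_m$ have discrete spectrum; and (b)~the integration by parts $\int_M\Delta_m(|\nabla u|^2/2)\,dm=0$ on noncompact $M$, which you rightly flag, is justified by a cutoff argument using the Gaussian decay of $m$ together with the gradient and Hessian estimates for eigenfunctions --- this is handled carefully in~\cite{CZ}, and a version of the same issue is tacitly present in the paper's Proposition~\ref{pr:sgap} as well. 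Your identification of the measure via $\Delta t=0$ and $\partial_t\psi=Kt$, and the block-diagonality of $\mathrm{Ric}_\infty$ giving $\mathrm{Ric}^\Sigma_\infty\ge K$, are both correct.
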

	
	The proof of Theorem~\ref{th:CZ} has a certain similarity with
	the \emph{Cheeger--Gromoll splitting theorem}.
	The role of the Busemann function in \cite{CG} is replaced with
	the eigenfunction in \cite{CZ}.
	Theorem~\ref{th:CZ} was recently generalized to RCD-spaces in \cite{GKKO}.
	
	Now we consider the case of $N<0$.
	We begin with an important equation for an eigenfunction.
	
	\begin{lemma}\label{lm:eigen}
		Let $(M,g,m)$ be a complete weighted Riemannian manifold satisfying $\mathrm{Ric}_{N} \ge K$
		for some $N<0$ and $K>0$, and $m(M)<\infty$.
		Suppose that the first nonzero eigenvalue of $-\Delta_m$ coincides with $KN/(N-1)$.
		Then the eigenfunction $u$ of $KN/(N-1)$ necessarily satisfies
		\begin{equation}\label{eq:Hessu}
		\mathrm{Hess}\, u =-\frac{Ku}{N-1} \cdot g
		\end{equation}
		as operators $TM \times TM \longrightarrow \mathbb{R}$.
	\end{lemma}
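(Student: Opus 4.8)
The plan is to re-examine the proof of Proposition~\ref{pr:sgap} and to track the equality cases in every estimate used there. Let $u$ be an eigenfunction for the eigenvalue $\lambda := KN/(N-1)>0$; it is nonconstant and, by elliptic regularity, smooth. Exactly as in the proof of Proposition~\ref{pr:sgap} (using $m(M)<\infty$, so that $1 \in H^1_0(M)$, both to kill the term $\int_M \Delta_m(|\nabla u|^2/2)\,dm$ and to integrate by parts), integrating the Bochner inequality of Theorem~\ref{th:Boch} gives
\[
\int_M (\Delta_m u)^2\,dm \;\ge\; \int_M \mathrm{Ric}_N(\nabla u)\,dm + \frac{1}{N}\int_M (\Delta_m u)^2\,dm \;\ge\; K\int_M |\nabla u|^2\,dm + \frac{1}{N}\int_M (\Delta_m u)^2\,dm .
\]
Substituting $\int_M (\Delta_m u)^2\,dm = \lambda^2 \int_M u^2\,dm$ and $\int_M |\nabla u|^2\,dm = \lambda \int_M u^2\,dm$, the outermost inequality becomes $\lambda^2 \ge K\lambda + \lambda^2/N$, and the choice $\lambda = KN/(N-1)$ makes it an \emph{equality}. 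Hence every inequality in the displayed chain is an equality after integration; in particular the integrated Bochner inequality is an equality.

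Next I would pass from the integrated equality to a pointwise one. Since the integrand of the pointwise Bochner inequality,
\[
\mathrm{Ric}_\infty(\nabla u) + \|\mathrm{Hess}\, u\|_{HS}^2 - \mathrm{Ric}_N(\nabla u) - \frac{(\Delta_m u)^2}{N},
\]
is continuous, nonnegative and of integral zero, it vanishes identically on $M$. Unwinding the two estimates used to prove Theorem~\ref{th:Boch}, and writing $p = \Delta_m u$, $q = \langle \nabla u,\nabla\psi\rangle$, this pointwise equality forces both: (a) $\|\mathrm{Hess}\, u\|_{HS}^2 = (\Delta u)^2/n$, i.e. $\mathrm{Hess}\, u = \tfrac{\Delta u}{n}\,g$ (the equality case of $\mathrm{tr}(B^2)\ge (\mathrm{tr}\,B)^2/n$ for symmetric $B$); and (b) $\tfrac{p}{N}+\tfrac{q}{N-n}=0$, because the remainder term $\tfrac{N(N-n)}{n}\bigl(\tfrac{p}{N}+\tfrac{q}{N-n}\bigr)^2$ must vanish and $N(N-n)>0$ when $N<0$. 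Condition (b) reads $\langle\nabla u,\nabla\psi\rangle = \tfrac{n-N}{N}\,\Delta_m u$.

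It then remains to combine (a) and (b). From (b),
\[
\Delta u = \Delta_m u + \langle\nabla u,\nabla\psi\rangle = \Bigl(1+\frac{n-N}{N}\Bigr)\Delta_m u = \frac{n}{N}\,\Delta_m u = -\frac{n\lambda}{N}\,u = -\frac{nK}{N-1}\,u ,
\]
so (a) gives $\mathrm{Hess}\, u = \tfrac{\Delta u}{n}\,g = -\tfrac{Ku}{N-1}\,g$, which is \eqref{eq:Hessu}. The only step that is not a purely formal consequence of chasing equality cases is the justification of the integrations by parts over the non-compact (yet finite-volume) manifold $M$ — that $\int_M \Delta_m(|\nabla u|^2/2)\,dm = 0$ and that Green's formula applies to $u$ and $\Delta_m u$ — which is handled exactly as in Proposition~\ref{pr:sgap}, relying on $1 \in H^1_0(M)$ together with the $L^2$-integrability of $u$ and its first two covariant derivatives, valid since $u$ is an eigenfunction of $\Delta_m$. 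I expect this integrability bookkeeping, rather than the algebra, to be the only delicate point.
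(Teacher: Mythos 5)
Your proof is correct and follows essentially the same route as the paper: integrate the Bochner inequality, observe that the chosen eigenvalue forces every integrated inequality to collapse to equality, pass to a pointwise equality by nonnegativity and continuity of the Bochner defect, unwind the two estimates in the proof of Theorem~\ref{th:Boch} to obtain $\mathrm{Hess}\,u = (\Delta u/n)\,g$ and $\Delta_m u/N + \langle\nabla u,\nabla\psi\rangle/(N-n)\equiv 0$, and combine. The paper's own proof is terser (it simply asserts that ``the Bochner inequality for $u$ necessarily becomes equality'' before quoting the two resulting identities), so your explicit treatment of the integral-to-pointwise step and of the sign $N(N-n)>0$ fills in details the paper leaves implicit rather than diverging from it.
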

	
	\begin{proof}
		Let $u$ be an eigenfunction of the eigenvalue $KN/(N-1)$, namely
		\[ \Delta_m u =-\frac{KN}{N-1} u. \]
		Tracing back to the proof of the lower bound of first nonzero eigenvalue,
		the Bochner inequality for $u$ necessarily becomes equality.
		Thus we find, from the proof of Theorem~\ref{th:Boch},
		\begin{align}
		&\mathrm{Hess}\, u=f \cdot g \quad \text{for some function}\ f:M \longrightarrow \mathbb{R};
		\label{eq:eigen1}\\
		&\frac{\Delta_{m}u}{N} +\frac{\langle \nabla u,\nabla\psi\rangle}{N-n} \equiv 0.
		\label{eq:eigen2}
		\end{align}
		On the one hand, we observe from the former equation \eqref{eq:eigen1} that $\Delta u=nf$.
		On the other hand, the latter \eqref{eq:eigen2} yields
		\[ \Delta u =\Delta_m u +\langle \nabla u,\nabla\psi \rangle
		=\frac{n}{N} \Delta_m u =-\frac{Kn}{N-1} u. \]
		Therefore we have $f =-Ku/(N-1)$ and complete the proof.
	\end{proof}
	
	\begin{theorem}\label{th:main}
		Let $(M,g,m)$ be a complete weighted Riemannian manifold of dimension $n$
		satisfying $\mathrm{Ric}_{N} \ge K$ for some $N<-1$ and $K>0$,
		and $m(M)<\infty$.
		Assume that the first nonzero eigenvalue of $-\Delta_m$ coincides with $KN/(N-1)$.
		\begin{enumerate}[{\rm (i)}]
			\item If $n \geq 2$, then $M$ is isometric to the warped product
			\[ \mathbb{R} \times_{\cosh(\sqrt{K/(1-N)}t)} \Sigma
			=\bigg( \mathbb{R} \times \Sigma,
			 dt^2 +\cosh^2\bigg( \sqrt{\frac{K}{1-N}}t \bigg) \cdot g_{\Sigma} \bigg) \]
			and the measure $m$ is written through the isometry as
			\[ m(dtdx) =\cosh^{N-1}\bigg( \sqrt{\frac{K}{1-N}}t \bigg) \,dt \,m_{\Sigma}(dx), \]
			where $(\Sigma,g_{\Sigma},m_{\Sigma})$ is an $(n-1)$-dimensional weighted Riemannian manifold
			satisfying $\mathrm{Ric}_{N-1} \ge K(2-N)/(1-N)$.
			
			\item If $n=1$,
			then $(M,g,m)$ is isometric to the model space $M_1$ in Example~$\ref{ex:Mil}$
			up to a constant multiplication of the measure.
		\end{enumerate}
	\end{theorem}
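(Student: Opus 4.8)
Throughout write $c:=\sqrt{K/(1-N)}>0$ (legitimate since $N<-1$), so that equation \eqref{eq:Hessu} of Lemma~\ref{lm:eigen} reads $\mathrm{Hess}\,u=c^{2}u\cdot g$. First I would extract a conserved quantity: taking the gradient of $|\nabla u|^{2}$ and using this equation,
\[ \nabla\bigl(|\nabla u|^{2}-c^{2}u^{2}\bigr)=2\,\mathrm{Hess}\,u(\nabla u,\cdot)-2c^{2}u\nabla u=0, \]
so $|\nabla u|^{2}-c^{2}u^{2}\equiv c^{2}a^{2}$ is constant. Since $m(M)<\infty$ constants lie in $H^{1}_{0}(M)$, hence $\int_{M}u\,dm=-\tfrac{N-1}{KN}\int_{M}\Delta_{m}u\,dm=0$, and as $u$ is a nonconstant continuous function on the connected $M$ it vanishes somewhere; at such a point $c^{2}a^{2}=|\nabla u|^{2}\ge 0$. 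If $a=0$, then $u$ and $\nabla u$ vanish simultaneously at some $p$, and since $u\circ\gamma$ solves the linear ODE $(u\circ\gamma)''=c^{2}(u\circ\gamma)$ along every unit-speed geodesic $\gamma$ from $p$, completeness would force $u\equiv 0$, a contradiction. Hence $a^{2}>0$; after rescaling the eigenfunction we may assume $|\nabla u|^{2}=c^{2}(u^{2}+1)$, so $\nabla u$ never vanishes.

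Next I would put $t\colon M\to\mathbb{R}$, $u=\sinh(ct)$. Then $|\nabla t|^{2}=|\nabla u|^{2}/\bigl(c^{2}\cosh^{2}(ct)\bigr)\equiv 1$, so $t$ has no critical points, its gradient flow lines are unit-speed geodesics, and the flow $\theta_{s}$ is complete (as $M$ is complete and $|\nabla t|\equiv1$). Substituting $u=\sinh(ct)$ into $\mathrm{Hess}\,u=c^{2}u\cdot g$ and simplifying gives
\[ \mathrm{Hess}\,t=c\tanh(ct)\,\bigl(g-dt\otimes dt\bigr)=\frac{\phi'}{\phi}\,\bigl(g-dt\otimes dt\bigr),\qquad \phi(t):=\cosh(ct), \]
so the level sets of $t$ are totally umbilic with shape operator $(\phi'/\phi)\,\mathrm{Id}$. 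Set $\Sigma:=t^{-1}(0)$, a nonempty smooth hypersurface (since $u$ vanishes exactly there), and $\Phi(s,x):=\theta_{s}(x)$. The identity $t\circ\Phi(s,x)=s$, together with completeness of the flow and connectedness of $M$, shows in the standard way that $\Phi\colon\mathbb{R}\times\Sigma\to M$ is a diffeomorphism; moreover the evolution of the induced metric under the flow, $\partial_{s}\bigl(\Phi^{*}g|_{T\Sigma}\bigr)=2(\phi'/\phi)\,\Phi^{*}g|_{T\Sigma}$, integrates with $\phi(0)=1$ to $\Phi^{*}g=ds^{2}+\cosh^{2}(cs)\,g_{\Sigma}$, where $g_{\Sigma}$ is the induced metric on $\Sigma$. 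This is the claimed warped product.

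For the measure, I would recall from the proof of Lemma~\ref{lm:eigen} that $\tfrac{\Delta_{m}u}{N}+\tfrac{\langle\nabla u,\nabla\psi\rangle}{N-n}\equiv0$; with $\Delta_{m}u=-\tfrac{KN}{N-1}u$ and $\nabla u=c\cosh(ct)\nabla t$ this yields $\langle\nabla t,\nabla\psi\rangle=c(n-N)\tanh(ct)$, hence $\psi=(n-N)\log\cosh(ct)+\psi_{\Sigma}(x)$ for some function $\psi_{\Sigma}$ on $\Sigma$, and combining $e^{-\psi}$ with $\mathrm{vol}_{g}=\cosh^{n-1}(ct)\,dt\,\mathrm{vol}_{g_{\Sigma}}$ gives $m=\cosh^{N-1}(ct)\,dt\,m_{\Sigma}$ with $m_{\Sigma}:=e^{-\psi_{\Sigma}}\mathrm{vol}_{g_{\Sigma}}$. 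It then remains to identify $\mathrm{Ric}^{\Sigma}_{N-1}$. For a $g$-unit vertical vector $X$, writing $\hat X:=\phi X$ for the corresponding $g_{\Sigma}$-unit vector, the standard warped product formulas (together with $\phi''/\phi=c^{2}$ and $(\phi'/\phi)^{2}=c^{2}-c^{2}/\phi^{2}$) give $\mathrm{Ric}_{g}(X)=\phi^{-2}\bigl(\mathrm{Ric}_{g_{\Sigma}}(\hat X)+(n-2)c^{2}\bigr)-(n-1)c^{2}$, $\mathrm{Hess}_{M}\bigl((n-N)\log\phi\bigr)(X,X)=(n-N)(\phi'/\phi)^{2}$, $\mathrm{Hess}_{M}\psi_{\Sigma}(X,X)=\phi^{-2}\mathrm{Hess}_{\Sigma}\psi_{\Sigma}(\hat X,\hat X)$, and $\langle\nabla\psi,X\rangle^{2}=\phi^{-2}\langle\nabla_{\Sigma}\psi_{\Sigma},\hat X\rangle_{\Sigma}^{2}$. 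Substituting these and $c^{2}=K/(1-N)$ into Definition~\ref{df:wRic}, and using $(N-1)-(n-1)=N-n$ to recognize the $\Sigma$-contributions as $\mathrm{Ric}^{\Sigma}_{N-1}(\hat X)$, all the $t$-dependence collapses into one factor and one gets
\[ \mathrm{Ric}_{N}(X)=K+\frac{1}{\cosh^{2}\bigl(\sqrt{K/(1-N)}\,t\bigr)}\biggl(\mathrm{Ric}^{\Sigma}_{N-1}(\hat X)-\frac{K(2-N)}{1-N}\biggr); \]
since $\mathrm{Ric}_{N}\ge K$ and the $\cosh$-factor is positive, this forces $\mathrm{Ric}^{\Sigma}_{N-1}\ge K(2-N)/(1-N)$, giving (i).

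For (ii), when $n=1$ the ODE $u''=c^{2}u$ has no nonconstant periodic solution, so $M=\mathbb{R}$; normalizing as above and shifting the coordinate so that the zero of $u$ sits at the origin, $u=\sinh(ct)$, and $\tfrac{\Delta_{m}u}{N}+\tfrac{\langle\nabla u,\nabla\psi\rangle}{N-1}\equiv0$ becomes $u'\psi'=Ku$, i.e.\ $\psi'=\sqrt{K(1-N)}\tanh(ct)$, which is exactly the derivative of the weight of $M_{1}$ in Example~\ref{ex:Mil}; hence $(M,g,m)$ coincides with $M_{1}$ up to a constant multiple of the measure. The main obstacle, beyond the lengthy but routine warped product curvature bookkeeping in the last step, is the global recognition in the second paragraph: ruling out $a=0$ so that $t$ is a genuine smooth distance-type function, and then upgrading the infinitesimal umbilicity of the level sets to an honest warped product by showing the normal gradient flow is a global diffeomorphism — which uses completeness of $M$ essentially, in the spirit of the Cheeger--Gromoll splitting argument behind Theorem~\ref{th:CZ}.
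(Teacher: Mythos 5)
Your argument is correct, and it takes a genuinely different route to the warped-product decomposition than the paper does. Both proofs hinge on Lemma~\ref{lm:eigen}, i.e.\ $\mathrm{Hess}\,u = c^2 u\,g$ with $c=\sqrt{K/(1-N)}$, and on \eqref{eq:eigen2} to split the weight $\psi$; but from that point the strategies diverge. The paper works pointwise along normal geodesics issuing from $\Sigma=u^{-1}(0)$: it shows $\gamma_x(t)=\exp_x(t\nabla u(x))$ is globally minimizing, computes $\Delta\rho_\Sigma=(n-1)\tanh t$, derives the unweighted bound $\mathrm{Ric}\ge 1-n$ along $\gamma_x$ from the explicit form of $\psi\circ\gamma_x$, and then invokes the Heintze--Karcher/Kasue Laplacian comparison \emph{and its rigidity} (via $\Sigma$-Jacobi fields, as in Sakurai) to recognize the warped product. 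You instead extract the first integral $|\nabla u|^2-c^2u^2\equiv\mathrm{const}$ directly from the Hessian equation, rule out $\mathrm{const}=0$ by ODE uniqueness along geodesics emanating from a common zero of $u$ and $\nabla u$, and thereby obtain a globally defined distance-type function $t=c^{-1}\mathrm{arcsinh}(u)$ with $|\nabla t|\equiv 1$ and $\mathrm{Hess}\,t=(\phi'/\phi)(g-dt\otimes dt)$. Integrating the Lie-derivative equation $L_{\nabla t}g=2\mathrm{Hess}\,t$ along the (complete) gradient flow then produces the isometry with $\mathbb{R}\times_{\cosh(ct)}\Sigma$ without any appeal to comparison geometry. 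Your route is more self-contained and closer in spirit to the Cheeger--Gromoll/Cheng--Zhou method mentioned in the introduction: it replaces the external comparison-rigidity input by an elementary flow argument, and your curvature bookkeeping for $\mathrm{Ric}^\Sigma_{N-1}$ is also slightly more informative, since you get the full identity $\mathrm{Ric}_N(X)=K+\phi^{-2}(\mathrm{Ric}^\Sigma_{N-1}(\hat X)-K(2-N)/(1-N))$ for vertical $X$, valid at every $t$, whereas the paper only evaluates it at $t=0$. What the paper's approach buys is that it stays within standard submanifold comparison technology (Heintze--Karcher, Kasue, Sakurai), so readers familiar with those results can check the rigidity step by citation; your approach asks the reader to carry out the (short but nonstandard) normal-flow reconstruction. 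Part (ii) of your proof matches the paper's.

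Two small points worth tightening if you write this up. First, when you say the gradient flow lines of $t$ are unit-speed geodesics, it is worth stating that $\nabla_{\nabla t}\nabla t=\mathrm{Hess}\,t(\nabla t,\cdot)^\sharp=\tfrac12\nabla|\nabla t|^2=0$, which is exactly what makes completeness of $M$ imply completeness of the flow. Second, in the diffeomorphism step, make explicit that surjectivity follows because for any $p\in M$ the flow of $-\mathrm{sgn}(t(p))\nabla t$ reaches $t^{-1}(0)$ after time $|t(p)|$, and injectivity follows from $t\circ\Phi(s,x)=s$ together with uniqueness of integral curves.
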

	
	\begin{proof}
		(i)
		By rescaling the Riemannian metric, we can assume that $K=1-N$.
		Let $u \in H^1_0 \cap C^{\infty}(M)$ be an eigenfunction satisfying $\Delta_{m}u =Nu$.
		Along any geodesic $\gamma: I \longrightarrow M$ for an interval $I \subset \mathbb{R}$,
		it follows from Lemma~\ref{lm:eigen} that $(u \circ \gamma)'' =(u \circ \gamma)$.
		This implies
		\begin{equation}\label{eq:ugamma}
		u \circ \gamma(t) =u \circ \gamma(0) \cdot \cosh t +(u \circ \gamma)'(0) \cdot \sinh t.
		\end{equation}
		
		Put $\Sigma:=u^{-1}(0)$ (which is nonempty since $\int_M u \,dm=0$).
		We deduce from \eqref{eq:ugamma} that, for $x,y \in \Sigma$,
		every geodesic connecting them is included in $\Sigma$.
		Hence $\Sigma$ is totally geodesic and the mean curvature of $\Sigma$ is identically zero.
		Lemma~\ref{lm:eigen} also shows that, for any smooth vector field $V$
		along a geodesic $\gamma$ included in $\Sigma$,
		\[ \langle \nabla_{\dot{\gamma}} (\nabla u \circ \gamma),V \rangle
		=\mathrm{Hess}\, u(\dot{\gamma},V) =u \cdot \langle \dot{\gamma},V \rangle \equiv 0. \]
		Hence $\nabla u \circ \gamma$ is a parallel vector field
		and $|\nabla u|$ is constant on $\Sigma$.
		We will normalize $u$ so as to satisfy $|\nabla u| \equiv 1$ on $\Sigma$.
		
		Fix $x \in \Sigma$ and consider the geodesic $\gamma_x:[0,\infty) \longrightarrow M$
		such that $\dot{\gamma}_x(0)=\nabla u(x)$.
		Then \eqref{eq:ugamma} and our normalization $|\nabla u|(x)=1$ yield
		\begin{equation}\label{th:lap}
		(u \circ \gamma_{x})(t) = \sinh t, \qquad t \in \mathbb{R}.
		\end{equation}
		For any unit speed geodesic $\eta : [0,s] \longrightarrow M$ from $x$ to $\gamma_{x}(t)$, \eqref{th:lap} and \eqref{eq:ugamma} imply $\sinh t = u(\eta(s)) \leq \sinh s$. Since $\sinh t$ is monotone increasing, we have $s \geq t$ and $\gamma_x$ is globally minimizing. 
		Denoting the distance function from $\Sigma$ by $\rho_{\Sigma}$,
		we obtain from \eqref{th:lap} that $\rho_{\Sigma} =|\mathrm{arcsinh}(u)|$ and,
		on $u^{-1}((0,\infty))$,
		\[ \Delta u =\Delta (\sinh \rho_{\Sigma})
		=\cosh \rho_{\Sigma} \cdot \Delta \rho_{\Sigma} +\sinh \rho_{\Sigma} \cdot |\nabla \rho_{\Sigma}|^2. \]
		Since $|\nabla\rho_{\Sigma}|=1$ and $\Delta u =nu =n \sinh \rho_{\Sigma}$,
		we have for $t \ge 0$
		\begin{equation}\label{th:lapequal}
		\Delta\rho_{\Sigma} \big( \gamma_{x}(t) \big)
		=(n-1)\tanh \rho_{\Sigma} \big( \gamma_x(t) \big) =(n-1)\tanh t.
		\end{equation}
		
		We shall compare the Laplacian comparison inequality \eqref{th:lapequal}
		with the (unweighted) Ricci curvature along $\gamma_x$.
		Using \eqref{th:lap} and \eqref{eq:eigen2}, we have
		
		\begin{align*}
		(\psi \circ \gamma_x)'(t)
		&= \langle \nabla \psi \circ \gamma_x, \dot{\gamma}_x \rangle (t)
		=\frac{1}{\cosh t} \langle \nabla \psi,\nabla u \rangle \big( \gamma_x(t) \big) \\
		&=\frac{n-N}{\cosh t} u \big( \gamma_x(t) \big) =(n-N) \tanh t.
		\end{align*}
		Hence
		\begin{equation}\label{th:splitweight}
		\psi \circ \gamma_x(t) = (n-N) \log(\cosh t) + \psi(x)
		\end{equation}
		and \begin{align*}
		\textrm{Ric}\big( \dot{\gamma}_{x}(t) \big)
		&= \textrm{Ric}_{N} \big( \dot{\gamma}_{x}(t) \big)
		-(\psi \circ \gamma_x)''(t) -\frac{(\psi \circ \gamma_x)'(t)^2}{n-N} \\
		&= \textrm{Ric}_{N} \big( \dot{\gamma}_{x}(t) \big)
		-\frac{n-N}{\cosh^2 t} -(n-N)\tanh^2 t \\
		& =\textrm{Ric}_{N} \big( \dot{\gamma}_{x}(t) \big) - (n-N) \ge 1-n.
		\end{align*}
		This curvature bound together with the minimality of $\Sigma$ implies that
		\begin{equation}\label{th:Ric} \Delta\rho_{\Sigma} \big( \gamma_{x}(t) \big) \le (n-1)\tanh t \end{equation}
		by \cite[Corollary~2.44]{Ka} (see also \cite{HK}, \cite[Theorem~4.3]{Sa1}).
		Then, since we have equality \eqref{th:lapequal},
		the rigidity theorem for the Laplacian comparison shows that
		$u^{-1}([0,\infty))$ is isometric to the warped product
		\[ [0,\infty) \times_{\cosh} \Sigma
		:=\big( [0,\infty) \times \Sigma, dt^2 +\cosh^2 t \cdot g_{\Sigma} \big), \]
		where $g_{\Sigma}$ is the Riemannian metric of $\Sigma$ induced from $g$ of $M$
		(see the proof of \cite[Theorem~1.8]{Sa1} for details). Here we give a sketch of the proof for the isometry along \cite{Sa1}.
		
		For a regular point $x\in\Sigma$ of $u$, let $T_{x}^{\perp}\Sigma$ be the orthogonal complement of $T_{x}\Sigma$. Choose an orthonormal basis $\{e_{x,i}\}_{i=1}^{n-1}$ of $T_{x}\Sigma$ and for $i = 1,...,n-1$,let $Y_{x,i}$ be the $\Sigma$-Jacobi field along $\gamma_{x}$ such that $Y(0)=e_{x,i}$ and $Y'_{x,i}(0)=-A_{\nabla u(x)}e_{x,i}$, where $A_{v}: T_{x}\Sigma\longrightarrow T_{x}\Sigma$ be the shape operator of the tangent vector $v$. Since the mean curvature on $\Sigma$ is $0$ and $\textrm{Ric}_{N} \big( \dot{\gamma}_{x}(t) \big) \geq 1-n$, we have $\Delta\rho_{\Sigma} \big( \gamma_{x}(t) \big) \le (n-1)\tanh t$ as in \eqref{th:Ric} and equality holds if and only if $Y_{x,i}(t) = \cosh(t)E_{x,i}(t)$ where $E_{x,i}$ are the parallel vector fields along $\gamma_{x}$ with $E_{x,i}(0) = e_{x,i}$. Then the map $\phi: [0,\infty)\times\Sigma\longrightarrow M$ defined by $\phi(t,x) := \gamma_{x}(t)$ provides the desired isometry between $[0,\infty) \times_{\cosh} \Sigma$ and $M$.

		By a similar discussion on $u^{-1}((-\infty,0])$
		we can extend the isometry to
		\[ M \cong \mathbb{R} \times_{\cosh} \Sigma
		=\big( \mathbb{R} \times \Sigma, dt^2 +\cosh^2 t \cdot g_{\Sigma} \big). \]
		Concerning the splitting of the measure,
		let $\Sigma$ be equipped with the measure
		$m_{\Sigma}:=e^{-\psi|_{\Sigma}} \mathrm{vol}_{g_{\Sigma}}$.
		By \eqref{th:splitweight} and the structure of the warped product, we see that the measure $m$ is written through the isometry as
		\begin{align*}
		m(dtdx) &=\cosh^{N-n} t \cdot e^{-\psi(x)} \cdot
		 \big( \cosh^{n-1} t \,dt \,\mathrm{vol}_{g_{\Sigma}}(dx) \big) \\
		&= \cosh^{N-1} t \,dt \,m_{\Sigma}(dx).
		\end{align*}
		With this expression one can see that the eigenfunction $u$ is in $L^2(M)$ when $N<-1$
		(but not in $L^2(M)$ for $N \in [-1,0)$).
		
		Finally, for any unit vector $v \in T_x \Sigma$ at $x \in \Sigma$,
		the sectional curvature of the plane $\dot{\gamma}_x(0) \wedge v$
		coincides with $-1$ (see \cite[Proposition~42(2) in \S 7]{ON}).
		Therefore we have, on the weighted Riemannian manifold
		$(\Sigma,g_{\Sigma},m_{\Sigma})$,
		\begin{align*}
		\mathrm{Ric}^{\Sigma}_{N-1}(v)
		&= \mathrm{Ric}^{\Sigma}(v) +\mathrm{Hess}^{\Sigma}\, \psi(v,v)
		-\frac{\langle \nabla^{\Sigma} \psi,v \rangle^2}{(N-1)-(n-1)} \\
		&= \mathrm{Ric}^M(v)+1 +\mathrm{Hess}^M\, \psi(v,v)
		-\frac{\langle \nabla^M \psi,v \rangle^2}{N-n} \\
		&=\mathrm{Ric}^M_N(v) +1 \ge 2-N.
		\end{align*}

		(ii)
		Fix an eigenfunction $u$ of the eigenvalue $KN/(N-1)$,
		take $x \in M$ with $u(x)=0$ and choose a unit speed geodesic
		$\gamma:\mathbb{R} \longrightarrow M$ with $\gamma(0)=x$.
		It follows from Lemma~\ref{lm:eigen} that
		\[ u \circ \gamma(t) =(u \circ \gamma)'(0) \cdot \sinh\bigg( \sqrt{\frac{K}{1-N}}t \bigg). \]
		Since $u$ is not constant, we have $(u \circ \gamma)'(0) \neq 0$
		and hence $u$ is injective.
		Therefore $M$ is isometric to $\mathbb{R}$
		and we will identify them via $\gamma$ as $\gamma(t)=t$.
		
		Denote by $\psi:\mathbb{R} \longrightarrow \mathbb{R}$ the weight function,
		namely $m=e^{-\psi} \,dt$.
		Then it follows from
		\[ -\frac{KN}{N-1} u =\Delta_m u =u'' -u' \psi' \]
		that
		\begin{align*}
		\psi'(t) &= \frac{1}{u'(t)} \bigg( u''(t) +\frac{KN}{N-1}u(t) \bigg) \\
		&=\sqrt{\frac{1-N}{K}} \bigg( \frac{K}{1-N} +\frac{KN}{N-1} \bigg)
		\tanh \bigg( \sqrt{\frac{K}{1-N}}t \bigg) \\
		&= \sqrt{K(1-N)} \tanh \bigg( \sqrt{\frac{K}{1-N}}t \bigg).
		\end{align*}
		Integrating this we have
		\[ \psi(t) =(1-N) \log \bigg( {\cosh} \bigg( \sqrt{\frac{K}{1-N}}t \bigg) \bigg) +\psi(0), \]
		which implies $m=e^{-\psi(0)} \cosh(\sqrt{K/(1-N)} t)^{N-1} \,dt$ as desired.
	\end{proof}
	
	The following corollary is a byproduct of the proof above
	(see \cite{KM,Mil} for the $1$-dimensional case).
	
	\begin{corollary}\label{cr:N>-1}
	Let $(M,g,m)$ be a complete weighted Riemannian manifold of dimension $n$
		satisfying $\mathrm{Ric}_{N} \ge K$ for some $N \in [-1,0)$ and $K>0$,
		and $m(M)<\infty$.
	Then $KN/(N-1)$ cannot be an eigenvalue of $-\Delta_m$.
	\end{corollary}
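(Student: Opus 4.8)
The plan is to argue by contradiction, running the proof of Theorem~\ref{th:main} essentially verbatim and then observing that it forces the putative eigenfunction out of $L^2(M)$. Suppose $KN/(N-1)$ were an eigenvalue of $-\Delta_m$, and let $u \in H^1_0 \cap C^{\infty}(M)$ be a nonzero, hence nonconstant, eigenfunction with $\Delta_m u = -\frac{KN}{N-1}u$. First I would note that Lemma~\ref{lm:eigen} is valid for every $N<0$, so we still have $\mathrm{Hess}\, u = -\frac{Ku}{N-1}\cdot g$. Then I would go through the argument of Theorem~\ref{th:main} and check that the hypothesis $N<-1$ entered there only at the very last step, the remark about $u\in L^2(M)$: after rescaling so that $K=1-N>0$, the ODE $(u\circ\gamma)''=(u\circ\gamma)$ along geodesics, the total geodesy of $\Sigma:=u^{-1}(0)$, the normalization $|\nabla u|\equiv 1$ on $\Sigma$, the identity $u\circ\gamma_x(t)=\sinh t$, the Laplacian and weight computations along $\gamma_x$, the Ricci bound $\mathrm{Ric}(\dot\gamma_x)\ge 1-n$ and the Laplacian-comparison rigidity all remain valid for $N\in[-1,0)$.

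Consequently, for $n\ge 2$ one obtains the warped product $M\cong\mathbb{R}\times_{\cosh(\sqrt{K/(1-N)}t)}\Sigma$ with $m(dtdx)=\cosh^{N-1}(\sqrt{K/(1-N)}t)\,dt\,m_{\Sigma}(dx)$ and $u=\sinh(\sqrt{K/(1-N)}t)$ along the line factor, while for $n=1$ one obtains $(M,g,m)\cong M_1$ up to a constant multiple of the measure, again with $u=\sinh(\sqrt{K/(1-N)}t)$. Next I would extract the contradiction. Since $\int_{\mathbb{R}}\cosh^{N-1}(\sqrt{K/(1-N)}t)\,dt<\infty$ and $m(M)<\infty$, the cross-sectional mass $m_{\Sigma}(\Sigma)$ is positive and finite (for $n=1$ this is the positive constant $e^{-\psi(0)}$), so by Fubini
\[
\int_M u^2\,dm = m_{\Sigma}(\Sigma)\int_{\mathbb{R}}\sinh^2\!\bigg(\sqrt{\tfrac{K}{1-N}}\,t\bigg)\cosh^{N-1}\!\bigg(\sqrt{\tfrac{K}{1-N}}\,t\bigg)\,dt.
\]
But $\sinh^2 s\,\cosh^{N-1}s\sim 2^{-1-N}e^{(N+1)s}$ as $s\to\infty$, and $N+1\ge 0$ for $N\in[-1,0)$, so the integral diverges. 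Hence $u\notin L^2(M)$, contradicting that $u$ is an eigenfunction, and therefore $KN/(N-1)$ cannot be an eigenvalue of $-\Delta_m$.

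The main point to be careful about is the claim that the derivation in Theorem~\ref{th:main} nowhere uses $N<-1$ apart from the $L^2$-assertion; I would verify this line by line, checking in particular that the only strict inequalities on $N$ actually invoked are $1-N>0$ and $n-N>0$, both of which hold for all $N\in[-1,0)$ (indeed for all $N<0$). The remaining ingredient is the elementary asymptotic estimate for $\sinh^2 s\,\cosh^{N-1}s$, which presents no difficulty.
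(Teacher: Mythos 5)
Your proposal is correct and follows essentially the same route as the paper: the corollary is stated there as a ``byproduct of the proof above,'' and the intended argument is precisely the parenthetical remark at the end of the proof of Theorem~\ref{th:main} that, once $M$ splits as $\mathbb{R}\times_{\cosh}\Sigma$ with $m=\cosh^{N-1}(\sqrt{K/(1-N)}\,t)\,dt\,m_\Sigma(dx)$ and $u=\sinh(\sqrt{K/(1-N)}\,t)$, the eigenfunction fails to be in $L^2(M)$ when $N\in[-1,0)$. Your line-by-line check that only $1-N>0$ and $n-N>0$ (not $N<-1$) are used before the $L^2$ assertion, together with the asymptotic $\sinh^2 s\,\cosh^{N-1}s\sim 2^{-1-N}e^{(N+1)s}$, supplies exactly the contradiction the paper relies on.
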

	
	\begin{remark}\label{rm:Sigma}
		The curvature bound of $\Sigma$ implies
		the estimate of the first nonzero eigenvalue $\lambda_1(\Sigma)$ as
		\[ \lambda_1(\Sigma) \ge \frac{K(2-N)}{1-N} \frac{N-1}{N-2} =K. \]
		This is better than the estimate $\lambda_1(M) \ge KN/(N-1)$ for $M$.
	\end{remark}
		
	%In \cite{Sa1}, Sakurai also used the Laplacian comparison of \cite{HK} to yield the splitting theorem on Riemannian manifolds with boundary satisfying Ricci curvature bound and mean curvature bound on boundary conditions.
	
	We close the article with a concrete example of the splitting phenomenon
	described in Theorem~\ref{th:main}(i) (with $K=1-N$).
	
	\begin{example}\label{ex:hyp}
		Let $(\mathbb{H}^2,g)$ be the hyperbolic plane of constant sectional curvature $-1$.
		There are smooth functions $u$ and $\psi$ satisfying
		\[ \Delta_m u =Nu, \qquad \mathrm{Ric}_N \equiv 1-N, \]
		where $m=e^{-\psi} \mathrm{vol}_g$.
	\end{example}
	
	Let us give the precise expressions of $u$ and $\psi$ in the upper half-plane model:
	\[ (\mathbb{H}^2,g) = \bigg( \mathbb{R} \times (0,\infty),\frac{dx^2+dy^2}{y^2} \bigg). \]
	By the consideration in the proof of Theorem~\ref{th:main}(i),
	we can expect that the function $u$ is written by using the distance function from the $y$-axis
	($\Sigma=u^{-1}(0)$ coincides with the $y$-axis).
	One can explicitly calculate it as
	\begin{align*}
	u(x,y) &= \pm \sinh \Big( d\big( (x,y),(0,\sqrt{x^2+y^2}) \big) \Big) \\
	&= \pm \sinh \bigg( \mathrm{arccosh}\bigg( 1+\frac{x^2+(y-\sqrt{x^2+y^2})^2}{2y\sqrt{x^2+y^2}} \bigg) \bigg) \\
	&= \pm \sqrt{\frac{x^2+y^2}{y^2}-1} =\frac{x}{y},
	\end{align*}
	where we choose the sign $+$ for $x>0$ and $-$ for $x<0$,
	and we used the equation $\sinh(\mathrm{arccosh}\,t)=\sqrt{t^2-1}$ for $t \ge 0$.
	The Christoffel symbols are readily calculated as
	\[ \Gamma^1_{11} =\Gamma^1_{22} =\Gamma^2_{12} \equiv 0, \quad
	\Gamma^1_{12}(x,y) =\Gamma^2_{22}(x,y) =-\frac{1}{y}, \quad
	\Gamma^2_{11}(x,y) =\frac{1}{y}. \]
	Using these we find
	\begin{align*}
	(\mathrm{Hess}\, u)_{(x,y)}
	&= \begin{pmatrix} 0 & -y^{-2} \\ -y^{-2} & 2xy^{-3} \end{pmatrix}
	-\frac{1}{y} \begin{pmatrix} 0 & -y^{-1} \\ -y^{-1} & 0 \end{pmatrix}
	+\frac{x}{y^2} \begin{pmatrix} y^{-1} & 0 \\ 0 & -y^{-1} \end{pmatrix} \\
	&= \frac{x}{y} \cdot \frac{1}{y^2} \begin{pmatrix} 1&0 \\ 0&1 \end{pmatrix}
	=u(x,y) \cdot g_{(x,y)}.
	\end{align*}
	
	Next we consider the weight function.
	Let
	\begin{align*}
	\psi_{1}(x,y)
	&:=(2-N) \log \Big[ \cosh \Big( d\big( (x,y),(0,\sqrt{x^2+y^2}) \big) \Big) \Big] \\
	&=(2-N) \log \bigg( \frac{\sqrt{x^2+y^2}}{y} \bigg),
	\end{align*}
	and 
	\begin{align*}
	\psi_{2}(x,y) := -(2-N) \log ( \sqrt{x^2+y^2}).
	\end{align*}
	Notice that $\psi_1$ naturally appears in respect of \eqref{eq:eigen2},
	and $\psi_2$ is employed to improve the convexity in the directions perpendicular to $\nabla u$.
	Indeed, we have $\langle \nabla u,\nabla\psi_{2} \rangle = 0$ and
	\begin{align*}
	\langle \nabla u,\nabla\psi_1 \rangle(x,y)
	&=(2-N)y^2 \bigg( \frac{1}{y} \cdot \frac{x}{x^2+y^2}
	-\frac{x}{y^2} \cdot \bigg( \frac{y}{x^2+y^2}-\frac{1}{y} \bigg) \bigg) \\
	&= (2-N) \frac{x}{y} =(2-N)u(x,y).
	\end{align*}
	Our weighted function will be $\psi = \psi_{1} + \psi_{2}$, namely:
	\begin{align*}
	\psi(x,y) = -(2-N) \log y.
	\end{align*}
	The above calculations yields
	\[ \Delta_m u =\Delta u -\langle \nabla u,\nabla\psi \rangle
	=2u -(2-N)u =Nu. \]
	
	We finally calculate $\mathrm{Ric}_N$ of this example.
	To this end, we observe
	\[ \frac{(\mathrm{Hess}\, \psi)_{(x,y)}}{2-N}
	= \begin{pmatrix} y^{-2} &
	0 \medskip\\
	0 &
	0 \end{pmatrix}, \qquad
	\frac{(d\psi \otimes d\psi)_{(x,y)}}{(2-N)^2}
	= \begin{pmatrix} 0 &
	0 \medskip\\
	0 &
	y^{-2} \end{pmatrix}. \]
	Therefore we obtain
	\begin{align*}
	(\mathrm{Ric}_N)_{(x,y)}
	&=-\frac{1}{y^2} \begin{pmatrix} 1&0 \\ 0&1 \end{pmatrix}
	+(\mathrm{Hess}\, \psi)_{(x,y)}
	+\frac{(d\psi \otimes d\psi)_{(x,y)}}{2-N} \\
	&= \frac{1-N}{y^2} \begin{pmatrix} 1&0 \\ 0&1 \end{pmatrix}.
	\end{align*}

\end{document}